\def\I{{\rm I}}
\def\J{{\rm J}}
\def\K{{\rm K}}
\def\L{\mathscr{L}}
\def\B{\mathcal{B}}
\def\V{\mathcal{V}}
\def\DD{\,{\scriptstyle \Delta}\,}
\newtheorem{theorem}{Theorem}[section]   
\newtheorem{corollary}[theorem]{Corollary}     
\newtheorem{lemma}[theorem]{Lemma}         
\newtheorem{proposition}[theorem]{Proposition}  
\newenvironment{notation}{{\noindent\bf Notation.\ }\rm}{\bigskip}
\newenvironment{terminology}{{\noindent\bf Terminology.\ }\rm}{\bigskip}  
\newenvironment{remark}{{\noindent\bf Remark.\ }\rm}{\bigskip}  
\newenvironment{example}{\noindent\bf Example. \rm}{\bigskip}
\newenvironment{definition}{\noindent\bf Definition.\ \rm}{\bigskip}
\newcommand{\JS}[2]{\ensuremath{\mathrm{JS}_{#1}^{#2}}}
\newcommand{\HS}[2]{\ensuremath{\mathrm{HS}_{#1}^{#2}}}
\def\half{{\textstyle\frac12}}
\author{Philip Feinsilver}
\address{Department of Mathematics\\
Southern Illinois University \\
Carbondale, IL. 62901, U.S.A.}
\email{pfeinsil@math.siu.edu}
\title[$\lowercase{\mathrm{sl}(2)}$ in the Boolean lattice]{Representations of $\mathrm{sl}(2)$ in the Boolean lattice, and the Hamming and Johnson schemes}
\begin{document}

\begin{abstract}    
Starting with the zero-square ``zeon algebra", the regular representation gives rise to a Boolean lattice representation of $\mathrm{sl}(2)$.
We detail the $\mathrm{su}(2)$ content of the Boolean lattice, providing the irreducible representations carried by the algebra generated by
the subsets of an $n$-set. The group elements are found, exhibiting the ``special functions" in this context. The corresponding Leibniz rule and
group law are shown. Krawtchouk polynomials, the Hamming and the Johnson schemes appear naturally. Applications to the Boolean poset and the structure of Hadamard-Sylvester matrices are shown as well.
\end{abstract}

\maketitle

\thispagestyle{empty}
\begin{section}{Introduction}
The basic algebraic tool is the ``zeon algebra", a commutative
algebra which is generated by $n$ elements that square to zero.  A basis for the algebra is then naturally in one-to-one correspondence with the
subsets of the standard $n$-set, as for Clifford algebras. The study begins with the consideration of the regular representation of the basis
zeons acting on the algebra. 
These give rise to inclusion operators relating the subsets of an $n$-set. We thus find an action of the Lie algebra sl$(2)$ on the Boolean lattice.
The decomposition into irreducible representations in carried out in detail. We are interested as well in the associated group action, which we find explicitly. 
Various applications arise that are of combinatorial and algebraic interest. \bigskip

Here are some specifics. In \S2, we recall the Hamming and Johnson metrics. Then the zeon algebra is detailed. The appearance of the Lie algebra
sl$(2)$ leads to the next section, where the irreducible representations of su$(2)$, the relevant form of the Lie algebra in this context, are reviewed.
In \S4, the irreducible constituents of the representation on the Boolean algebra of subsets of an $n$-set, are found and their structure presented.
The next section discusses the group formed by exponentiation of the global representation and some examples/applications.
Then in \S6 we show how the Hamming scheme arises, where the Krawtchouk polynomials play an important r\^ole. A version of
the Bargmann transform comes into play connecting the matrices of the Johnson scheme with our algebra of sl(2) operators.

\begin{subsection}{References}
The book \cite{SST} from \S5.3 through Chapter 6, is directly related to this work. They work with the symmetric group rather than sl$(2)$.
Parts of the discussion of the Johnson scheme in \cite{BI} are close in spirit to the present work. Proctor \cite{PR} explicitly discusses the sl$(2)$ 
aspects of posets. Also see \cite{GO}. Godsil's notes on association schemes use local inclusion operators and are close in many points to the approach
taken here. Krawtchouk polynomials are well-known in the context of the Hamming scheme. They occur as orthogonal polynomials with respect
to the binomial distribution, see, e.g. \cite{Sz}. Our work has many connections with the book of Louck, \cite{L}, to which our presentation may
serve as a useful complement.
\end{subsection}
\end{section}

\section{Boolean lattice}
Fix a positive integer $n$ and consider the Boolean lattice, $\B$, of subsets of the standard $n$-set, $\{1,2,\ldots,n\}$.  In contexts where
$n$ may vary, we write explicitly $\B=\B(n)$. \bigskip

\begin{notation} 
 Roman capitals $\I$, $\J$, etc., denote subsets of $\{1,\ldots,n\}$, and will be used both as unordered multi-indices,  e.g., $\I=\{i_1,i_2,\ldots,i_m\}$, $m\le n$, or
as ordered multi-indices $\I=(i_1,i_2,\ldots,i_m)$ where $i_1~<~i_2~<~\cdots~<~i_m$. 
The complement of $\I$ in $\{1,\ldots,n\}$ is indicated by a prime: $\I'$.\\
We will use math italic $I$  for the identity matrix.\\
 Our usage is that ${\rm A}\subset {\rm B}$ does not require proper inclusion.\\
For binomial coefficients, we require for $\binom{a}{b}$ to be nonzero that $b$ be a nonnegative integer.\\
Partitioning by cardinality, a \textsl{layer} refers to all subsets with the same number of elements. 
For any subset $\I$, $|\I|$ denotes the number of elements in $\I$. We denote the $\ell^{\rm th}$ layer by
$$\B_{\ell}=\{ \I\in\B\colon |\I|=\ell\} \ .$$
\end{notation}

\begin{subsection}{Hamming and Johnson metrics}
Recall the Hamming metric on $\B$  given by
$$\text{dist}_{H}(\I,\J)=|\I\DD\J|$$
where $\DD$ denotes symmetric difference. The Johnson metric on $\B_\ell$ is
$$\text{dist}_{J}(\I,\J)=\ell-|\I\cap\J|={\textstyle\frac12}\,|\I\DD\J|$$
half the Hamming distance. Each metric defines a family of relations matrices. For the Hamming metric, define the $2^n\times 2^n$ matrix
$\HS{j}{n}$, the $j^{\text{th}}$ relation matrix for the Hamming scheme on $n$ points, with entries
\begin{equation}\label{eq:hamming}
(\HS{j}{n})_{\I\J}=\begin{cases} 1, & \text{ if } |\I\DD\J|=j  \\
0,& \text{ otherwise }
\end{cases}
\end{equation}
the indicator of the relation $\text{dist}_H(\I,\J)=j$. Similarly for each layer $\B_\ell$, define the $\binom{n}{\ell}\times\binom{n}{\ell}$ matrix
$$(\JS{j}{n\ell})_{\I\J}=\begin{cases} 1, & \text{ if } |\I\cap\J|=\ell-j  \\
0, & \text{ otherwise }
\end{cases}$$
the indicator of the relation $\text{dist}_{J}(\I,\J)=j$.
\end{subsection}

\begin{subsection}{Zeon algebra} \label{subsec:zeon}
Consider a vector space $\V_1=\mathbb{C}^n$ with basis $\{e_1,e_2,\ldots,e_n\}$. We define the \textsl{zeon algebra} to 
be the algebra over $\mathbb{C}$ generated by the elements $e_i$ with the relations
\begin{align*}
e_ie_j&=e_je_i\,,&&\forall\, i,j\\
e_i^2&=0\,,&&\forall\, i
\end{align*}
in summary: the $e_i$ commute and square to zero. The name zeon is a variation on ``boson" and ``fermion" with the ``ze" short for ``zero". \bigskip

A basis for the algebra is given by the products indexed by subsets
$$e_\I=e_{\{i_1,i_2,\ldots,i_\ell\}}=e_{i_1}e_{i_2}\cdots e_{i_\ell}$$
We denote by $\V_\ell$ the subspace generated by $e_\I$, $\I\in\B_\ell$.  \bigskip

Adjoin $\V_0=\text{span}\{e_\emptyset\}$, the one-dimensional space generated by $e_\emptyset$. Then the zeon algebra is identified with the vector space
$$\V=\bigoplus_{\ell=0}^n \V_\ell$$
where in the algebra, $e_\emptyset=1$, the multiplicative identity. \bigskip

\begin{definition}
The basis $\{e_\I\}$, where $\I$ runs through $\B(n)$, is called the \textit{natural basis}.
\end{definition}

\begin{subsubsection}{Orthogonality}
We take the standard inner product whereby the $e_i$ are an orthonormal system and extend it to $\B$ so that
$$\langle e_\I,e_\J\rangle =\delta_{\I\J}\ .$$
Thus the $e_\I$ are an orthonormal system.
\end{subsubsection} \bigskip

\begin{subsubsection}{The $\ \hat{}$-representation}

Now consider the linear operator $\hat e_i$ of multiplication by $e_i$. 
$$ \hat e_i\, e_\I=\begin{cases} e_{\{i\}\cup \I}, & \text{if } i\notin\I\\
0, &\text{otherwise}
\end{cases}
$$

Consider the dual basis $\{\delta_i\}$. The action of $\delta_i$ is given by the linear operator $\hat\delta_i$ defined by
$$ \hat \delta_i\, e_\I=\begin{cases} e_{\I\,\setminus\,\{i\}},  & \text{if } i\in\I\\
0, &\text{otherwise}
\end{cases}
$$

We check that
$$\langle \hat e_i e_\I,e_\J\rangle = \langle e_\I,\hat \delta_i e_\J\rangle$$
indeed identifying $\hat \delta_i=\hat e_i^*$, the adjoint of $\hat e_i$.
Together the $\hat e_i$'s and $\hat \delta_i$'s generate a *-algebra. \bigskip

The operators $\hat e_i$, $\hat\delta_i$ satisfy the anticommutation relations
\begin{equation}\label{eq:fermi}
\hat e_i\hat\delta_i+\hat\delta_i\hat e_i=I
\end{equation}

as is easily seen by considering the action on $e_\I$ according to whether $i\in\I$ or not. On the other hand this shows that the commutator, call it
$\hat h_i$, satisfies
\begin{equation}\label{eq:bose}
\hat h_i=\hat\delta_i\hat e_i-\hat e_i\hat\delta_i=I-2\hat e_i\hat\delta_i
\end{equation}

\begin{proposition} \label{prop:LA}
The operators $\{\hat e_i,\hat\delta_i,\hat h_i\}$ satisfy the commutation relations
$$[\hat\delta_i,\hat e_i]=\hat h_i\,,\qquad [\hat e_i,\hat h_i]=2\hat e_i\,,\qquad [\hat h_i,\hat\delta_i]=2\hat\delta_i$$
\end{proposition}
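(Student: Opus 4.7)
The first commutation relation $[\hat\delta_i,\hat e_i]=\hat h_i$ is just a restatement of equation \eqref{eq:bose}, which defines $\hat h_i$ as exactly this commutator; so nothing needs to be proved for it beyond recalling the definition.

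For the remaining two, my plan is to expand the commutators using $\hat h_i = I - 2\hat e_i\hat\delta_i$ and reduce the resulting triple products with the anticommutation relation \eqref{eq:fermi} together with the nilpotency identities $\hat e_i^2=0$ and $\hat\delta_i^2=0$. The first of these nilpotencies holds because $e_i^2=0$ in the zeon algebra, and the second follows either by taking adjoints or by observing that $\hat\delta_i$ removes $i$ from a subset, which cannot be done twice in succession.

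The computational heart of the argument is the pair of triple-product identities $\hat e_i\hat\delta_i\hat e_i=\hat e_i$ and $\hat\delta_i\hat e_i\hat\delta_i=\hat\delta_i$. These are immediate from \eqref{eq:fermi}: for instance $\hat e_i\hat\delta_i\hat e_i = (I-\hat\delta_i\hat e_i)\hat e_i = \hat e_i - \hat\delta_i\hat e_i^2 = \hat e_i$, and similarly for the other. With these in hand, $[\hat e_i,\hat h_i] = -2[\hat e_i,\hat e_i\hat\delta_i] = 2\hat e_i\hat\delta_i\hat e_i = 2\hat e_i$, and in the same way $[\hat h_i,\hat\delta_i] = -2[\hat e_i\hat\delta_i,\hat\delta_i] = 2\hat\delta_i\hat e_i\hat\delta_i = 2\hat\delta_i$.

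There is no real obstacle here: once \eqref{eq:fermi} and the two nilpotency identities are on the table, the verification is essentially a two-line algebraic manipulation. An alternative would be to verify each commutator by its action on a basis vector $e_\I$, splitting into the cases $i\in\I$ and $i\notin\I$, but this is strictly more work than the computation above and offers no additional insight.
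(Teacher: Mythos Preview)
Your proof is correct and follows essentially the same route as the paper: establish the triple-product identities $\hat e_i\hat\delta_i\hat e_i=\hat e_i$ and $\hat\delta_i\hat e_i\hat\delta_i=\hat\delta_i$, then expand the commutators using $\hat h_i=I-2\hat e_i\hat\delta_i$. The only minor difference is that you derive the triple-product identities algebraically from \eqref{eq:fermi} together with $\hat e_i^2=\hat\delta_i^2=0$, whereas the paper verifies them directly by acting on $e_\I$ and splitting into the cases $i\in\I$, $i\notin\I$ --- the very approach you dismiss at the end as ``strictly more work''.
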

\begin{proof} Start with
\begin{equation}\label{eq:triples}
 \hat e_i\hat\delta_i\hat e_i=\hat e_i \,,\qquad \hat\delta_i\hat e_i\hat\delta_i=\hat\delta_i
\end{equation}
which are seen by acting on $e_\I$ and considering whether $i\in\I$.
Now, the result follows directly upon writing out the commutators. 
\end{proof}
From equation \eqref{eq:triples} we have
\begin{corollary} 
The operators $\hat e_i\hat\delta_i$ and $\hat\delta_i\hat e_i$ are idempotents satisfying 
$$\hat e_i\hat\delta_i+\hat\delta_i\hat e_i=I\ \mathrm{and}\ (\hat e_i\hat\delta_i)(\hat\delta_i\hat e_i)=(\hat\delta_i\hat e_i)(\hat e_i\hat\delta_i)=0$$
i.e., they are complementary projections.
\end{corollary}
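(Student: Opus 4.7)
The plan is to obtain every assertion from the triple identities in \eqref{eq:triples} together with the anticommutation relation \eqref{eq:fermi}, without separately invoking the squares-to-zero of $\hat e_i$ and $\hat\delta_i$. I would first extract idempotency, then identify the sum relation with \eqref{eq:fermi}, and finally deduce the two orthogonality relations by combining these two ingredients.

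First, idempotency drops out by associative regrouping of the triples: $(\hat e_i\hat\delta_i)^2=\hat e_i(\hat\delta_i\hat e_i\hat\delta_i)=\hat e_i\hat\delta_i$ using the second identity in \eqref{eq:triples}, and symmetrically $(\hat\delta_i\hat e_i)^2=\hat\delta_i(\hat e_i\hat\delta_i\hat e_i)=\hat\delta_i\hat e_i$ using the first. The sum $\hat e_i\hat\delta_i+\hat\delta_i\hat e_i=I$ is exactly equation \eqref{eq:fermi}, so no work is needed there.

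For the orthogonality, I would substitute $\hat\delta_i\hat e_i=I-\hat e_i\hat\delta_i$ into the product and expand using the idempotency just obtained: $(\hat e_i\hat\delta_i)(\hat\delta_i\hat e_i)=\hat e_i\hat\delta_i-(\hat e_i\hat\delta_i)^2=0$, and the companion identity $(\hat\delta_i\hat e_i)(\hat e_i\hat\delta_i)=0$ follows by swapping the roles of $\hat e_i$ and $\hat\delta_i$. Self-adjointness of both idempotents is automatic since $\hat\delta_i=\hat e_i^{*}$, upgrading them to orthogonal projections onto complementary subspaces.

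There is no real obstacle — the whole argument is a handful of one-line manipulations — but the observation worth highlighting is that the triple identities \eqref{eq:triples} already contain idempotency in disguise, and once that is paired with \eqref{eq:fermi} the complementarity is forced algebraically, bypassing any separate appeal to $\hat e_i^{\,2}=\hat\delta_i^{\,2}=0$ or to a computation on the basis $e_\I$.
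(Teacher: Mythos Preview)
Your proof is correct and matches the paper's intent: the corollary is stated immediately after \eqref{eq:triples} with the one-line justification ``From equation \eqref{eq:triples} we have,'' and you have simply written out the associative regroupings and the use of \eqref{eq:fermi} that this phrase encodes. Your additional remark on self-adjointness is a small bonus the paper omits, but the core argument is the same.
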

Observe that the projections $\hat e_i\hat \delta_i$ along with the identity generate a commutative algebra. \bigskip
\end{subsubsection}
\begin{subsubsection}{Incidence matrices for inclusion}
Define $T=\sum \hat e_i$, $T^*=\sum\hat\delta_i$. We will identify $T$ and $T^*$ with their matrices in the natural basis $\{e_\I\}$.
Note that (the matrix of) $T^*$ is the transpose of (the matrix of) $T$. \bigskip

Thus $T$ is the $n\times n$ matrix with entries
$$ T_{\I\J}=\langle e_\I,Te_\J\rangle=\begin{cases} 1, & \text{ if } \I\supset \J \text{ and } |\I\setminus\J|=1 \\
0,& \text{ otherwise }
\end{cases}
$$
with $T^*$ its transpose.  Note that $T+T^*=\HS{1}{n}$, where sets $\I$ and $\J$ are unit distance apart if they differ by adjoining or removing
a single element. \bigskip

Observe that 
$$T^2=\sum_{i\ne j}e_ie_j=2\sum_{|\I|=2} e_\I$$
and in general, each set appears $k!$ times in $T^k$, so that
\begin{equation}\label{eq:elemsymm}
T^k/k!=\sum_{|\I|=k} e_\I
\end{equation}
the $k^{\text{th}}$ elementary symmetric function in the variables $e_i$. 
Now consider the exponential series for $e^{zT}$. Start with the zero-square property to note
$$e^{ze_i}=1+ze_i$$
Thus
\begin{align*}
e^{zT}&=\sum_{k\ge0}\frac{z^k}{k!}T^k=\prod_i e^{ze_i}\\
&=\prod_i(1+ze_i)=\prod_k z^k\sum_{|\I|=k} e_\I
\end{align*}
in agreement with \eqref{eq:elemsymm}. Using \eqref{eq:elemsymm} we can derive 
$$(T^k/k!) e_\J=\sum_{|\K|=k}e_\K e_{\J}=\sum_{\substack{ \I=\K\cup\J\\|\K|=k}}e_{\I}$$
so that $\I$ appears in the sum if and only if $\K\cap\J=\emptyset$ and $\I\supset\J$ differing by exactly $k$ elements.
Thus the matrix elements
\begin{equation}\label{eq:inclusion}
(T^k/k!)_{\I\J}=\langle e_\I,(T^k/k!) e_\J\rangle=\begin{cases} 1, & \text{ if } \I\supset \J \text{ and } |\I\setminus\J|=k \\
0,& \text{ otherwise }
\end{cases}
\end{equation}
the incidence matrix for inclusion where the sets differ by $k$ elements. 
\end{subsubsection}

\begin{subsubsection}{Matrix realizations of the zeon algebra}
The matrices for the linear maps $\hat e_i$ give a realization of the zeon algebra. Another approach is to use explicit Kronecker products of
matrices. Start with $R=\begin{pmatrix} 0&0\\1&0\end{pmatrix}$. Then we have the isomorphism induced by the correspondences
\begin{align*}
e_i\,&\leftrightarrow\, I\otimes \cdots \otimes I\otimes R\otimes I\otimes \cdots \otimes I\\
\delta_i\,&\leftrightarrow\, I\otimes \cdots \otimes I\otimes R^*\otimes I\otimes \cdots \otimes I\\
\end{align*}
$R$ in the $i^{\rm th}$ spot, the * denoting transpose. With $R^2=0$, the verification is immediate. The corresponding matrices can be calculated
iteratively by associating to the left, resulting in $2^n\times 2^n$ matrices. Using binary labelling of rows and columns of the tensor products,
it is readily seen that this realization is permutation equivalent to the $\hat{}$-representation, identifying each $n$-digit binary label with the corresponding
subset of $\{1,\ldots,n\}$. Note the correspondence of the $e_i$ alone or of the $\delta_i$ alone
gives an isomorphism with the zeon algebra. Including the $e$'s and $\delta$'s together gives
an isomorphism of Lie algebras, correspondence of commutators, and of Jordan algebras, the anticommutators corresponding.
\end{subsubsection}

\begin{subsubsection}{Complementation}
The complementation symmetry of $\B$ yields an involution on $\V$. Denote by $\I'$ the complement of $\I$ in $\{1,2,\ldots,n\}$. Then the involution
on $\V$ is defined by
$$\text{for }\phi=\sum_\I c_\I e_\I \quad\text{define}\quad \phi'=\sum_\I c_\I e_{\I'}\ .$$
Compare the action of the operators $\hat e_i$ and $\hat \delta_i$:
$$\begin{aligned}
\hat e_i\, e_{\I'}=\begin{cases} e_{\I'\cup \{i\}}, & \text{if } i\in\I\\
0, &\text{otherwise}
\end{cases}
\end{aligned} \ ,\qquad
\begin{aligned}
\hat \delta_i\, e_{\I}=\begin{cases} e_{\I\,\setminus\,\{i\}},  & \text{if } i\in\I\\
0, &\text{otherwise}
\end{cases}
\end{aligned}
$$
Noting
$$(\I\setminus \{i\})'=\I'\cup\{i\}$$
we deduce the relations
\begin{equation}\label{eq:comp}
 T\phi'=(T^*\phi)' \qquad\text{and}\qquad T^*\phi'=(T\phi)'
\end{equation}
for $\phi\in\V$. \bigskip

\begin{paragraph}{\textsl{Complementation reverses dictionary order}}
Perhaps this is obvious. In any case, to gain some familiarity with our context, let us show it.
First, an observation about the Johnson metric.
\begin{proposition}
If $\I,\J\in \B_\ell$, then $\text{dist}_{J}(\I',\J')=\text{dist}_{J}(\I,\J)$. That is, complementation is an isometry for the Johnson metric.
\end{proposition}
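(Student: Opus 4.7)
The plan is to reduce the statement to the elementary set-theoretic fact that symmetric difference is unchanged under complementation. Given $\I,\J\in\B_\ell$, I would use the alternative form of the Johnson distance provided in the excerpt, namely $\text{dist}_J(\I,\J)=\tfrac12|\I\DD\J|$, because then the proof becomes a one-line identity: $\I'\DD\J'=\I\DD\J$, so taking halves gives the result immediately. An element $k\in\{1,\ldots,n\}$ lies in $\I'\DD\J'$ iff it lies in exactly one of $\I',\J'$ iff it lies in exactly one of $\I,\J$, which justifies the set equality.

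If one preferred the intersection form $\text{dist}_J(\I,\J)=\ell-|\I\cap\J|$, I would instead invoke De Morgan's law in the form $\I'\cap\J'=(\I\cup\J)'$, giving $|\I'\cap\J'|=n-|\I\cup\J|=n-2\ell+|\I\cap\J|$ by inclusion--exclusion and $|\I|=|\J|=\ell$. Since $\I',\J'\in\B_{n-\ell}$, the Johnson distance at that layer is $\text{dist}_J(\I',\J')=(n-\ell)-|\I'\cap\J'|=\ell-|\I\cap\J|=\text{dist}_J(\I,\J)$.

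There is no real obstacle here; the statement is essentially a sanity check that the Johnson metric descends cleanly from the Hamming metric, which is manifestly complementation-invariant via $\I'\DD\J'=\I\DD\J$. I would present the symmetric-difference argument because it is the shorter of the two and fits naturally with the operator relations \eqref{eq:comp} just established.
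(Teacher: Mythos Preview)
Your proof is correct and follows essentially the same idea as the paper's: the paper observes that complementation swaps the pieces $\I\setminus\J$ and $\J\setminus\I$ (each of size $k$), which is exactly your identity $\I'\DD\J'=\I\DD\J$ phrased componentwise. Your symmetric-difference formulation is slightly more streamlined, but the underlying argument is the same.
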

\begin{proof}This is readily seen by a useful diagram (Figure \ref{fig:Johnson}). If $\text{dist}_{J}(\I,\J)=k$, we note that
$|\I\setminus\J|=|\J\setminus\I|=k$. Noting the definition, $\I\setminus\J=\I\cap\J'$, we see directly that
$\I\setminus\J$ and $\J\setminus\I$ are switched upon complementation. Thus the Johnson metric is preserved.
\begin{figure}[ht!]
\begin{center}
\scalebox{.5}{\input{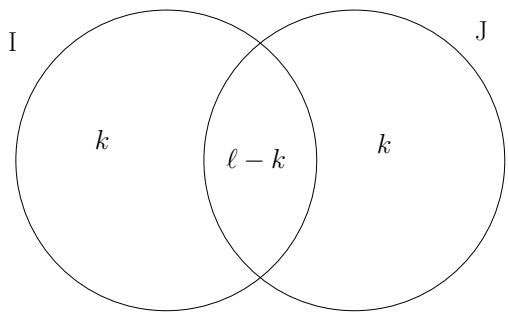}}
\caption{Johnson distance $\text{dist}_{J}(\I,\J)=k$.}\label{fig:Johnson}
\end{center}
\end{figure}
\end{proof}
For comparison of two sets of size $\ell$, we write the elements as a string. Take $n=9$, $\I=\{1,3,6,7,8,9\}$,
$\J=\{1,3,4,5,7,8\}$. We identify these with strings for comparison
$$\begin{matrix}& 134578\\&136789\end{matrix}$$
Identify the initial matching segment, the common prefix, $p$, the first place they differ $a$ vs.\,$A$, and the rest, suffixes $s$, $S$ :
$$\begin{matrix}& pas\\&pAS\end{matrix}$$
In our example, $p=13$, $a=4$, $A=6$, $\I\cap\J=\{1,3,7,8\}$.
Assume $a<A$. We call $a$ the {\it comparison value}, cv. Certainly, everything appearing in $p$ is
in $\I\cap\J$. By definition, cv$=a$ is not in $\I\cap\J$, otherwise it would extend $p$. But $A$ could be in $\I\cap\J$. In any case,
cv$=\min (\I\DD\J)$. If cv$\in\I\setminus\J$, it moves to $\J\setminus\I$ under complementation, and vice versa,
so the order of the strings is reversed.
\end{paragraph}
\end{subsubsection}

\end{subsection}

\section{Representations of {\rm su}$(2)$}
Starting with the Pauli matrices
$$\sigma_x=\begin{pmatrix}0&1\\1&0\\ \end{pmatrix}\,,\qquad \sigma_y=\begin{pmatrix}0&-i\\i&0\\ \end{pmatrix}\,,\qquad
\sigma_z=\begin{pmatrix}1&0\\ 0&-1\\ \end{pmatrix} $$
form the complex linear combinations
$$R=\textstyle{\frac12}(\sigma_x-i\sigma_y)=\begin{pmatrix}0&0\\1&0\\ \end{pmatrix}\,,\qquad
L=\textstyle{\frac12}(\sigma_x+i\sigma_y)=\begin{pmatrix}0&1\\0&0\\ \end{pmatrix}$$
and let $H=\sigma_z$. Then $R$ is a \textsl{raising operator}, $L$ a \textsl{lowering operator} for representations of $\text{su}(2)$ as will be seen shortly.\\

The defining commutation relations are thus:
$$[L,R]=H\,,\qquad [R,H]=2R \,,\qquad [H,L]=2L$$
Working in the enveloping algebra, adjoining the identity operator $I$, it is readily checked that the Casimir operator
$$C=4RL+(H+I)^2$$
commutes with each of $R$, $L$, and $H$.  \bigskip

\begin{terminology}
Three operators in correspondence with $R$, $L$, and $H$, satisfying the above commutation relations are called a \textsl{standard triple}.
\end{terminology}

The irreducible representations are constructed as follows. Choose an integer $N\ge0$. Let $\phi_0$ be a \textsl{vacuum vector}, i.e.,
determined by the conditions
$$ L\phi_0=0\,,\qquad C\phi_0=(N+1)^2\phi_0$$
The condition on $C$ is equivalent to
$$H\phi_0=N\phi_0$$
A basis for the representation is the sequence of vectors
$$\phi_j=R^j\phi_0$$
where $0\le j\le N$. The Casimir operator satisfies $C\phi_j=(N+1)^2\phi_j$, for $0\le j\le N$. Observe that the representation has
dimension $N+1$. Inductively, in the enveloping algebra, 
$$HR=R(H-2I)$$
so that
$$H\phi_j=R^j(H-2jI)\phi_0=(N-2j)\phi_j$$
And one checks that for $j\ge1$,
$$LR^j-R^jL=\sum_{k=0}^{j-1}R^{j-k-1}HR^k$$
So
$$LR^j\phi_0=\sum_{k=0}^{j-1}(N-2k)\phi_{j-1}$$
Thus, the action on the basis is given by
\begin{equation}\label{eq:su2}
R\phi_j=\phi_{j+1}\,,\qquad L\phi_j=j(N+1-j)\phi_{j-1}\,,\qquad H\phi_j=(N-2j)\phi_j
\end{equation}
Conversely, defining $R$, $L$, and $H$ by these formulas, the required commutation relations
are seen to hold.  An alternative basis is often useful, scaling by $j!$. I.e., define
$$\tilde\phi_j=(R^j/j!)\phi_0=\phi_j/j!$$
The action on this basis is
\begin{equation}\label{eq:tildeaction}
R\tilde\phi_j=(j+1)\tilde\phi_{j+1},\ L\tilde\phi_j=(N+1-j)\tilde\phi_{j-1},\ H\tilde\phi_j=(N-2j)\tilde\phi_j
\end{equation}
Note that both $RL$ and $LR$ are diagonal in either basis with
$$RL\phi_j=j(N+1-j)\phi_j\,,\qquad LR\phi_j=(j+1)(N-j)\phi_j$$
In particular, $RL$, $LR$ and $H$ all commute. \bigskip

\begin{notation}     
We use boldface $\bf d$ to denote the irreducible representation of dimension $d$, so that $N=d-1$.
The corresponding basis elements will be denoted $\phi_j(N)$. 
\end{notation}

\begin{remark} In the physics literature for angular momentum, the letter $j$ is used to describe the representation where
$j=N/2$ in our notation. Typically states are labelled by kets $|jm\rangle$ with $-j\le m\le j$. In the present context, it is more
natural to use the variant notation we have indicated. Further on, we will use kets to denote the states comprising the su(2) content of $\B$.
\end{remark}

\begin{terminology}
We call $N$ the \textsl{principal number} of an irreducible representation. By a \textsl{vacuum state}, denoted $\Omega$, we mean a vector satisfying
$L\Omega=0$, $H\Omega=N\,\Omega$. It plays the r\^ole of $\phi_0$ in the above discussion. $R$ acts nilpotently on $\Omega$, with
$R^{N+1}\Omega=0$.
\end{terminology}

\begin{subsection}{Orthogonality}
With $R$ and $L$ mutual adjoints, the inner product, taking $k\ge j$,
\begin{align*}
\langle\phi_j,\phi_k\rangle&=\langle\phi_j,R^k\phi_0\rangle=\langle L^k\phi_j,\phi_0\rangle\\
&=j(j-1)\cdots(j-k+1)\,(N+1-j)\cdots(N+k-j)\langle\phi_0,\phi_0\rangle
\end{align*}
So we have an orthogonal system with
$$\|\phi_j\|^2=j!\,N^{(j)}\|\phi_0\|^2$$
the superscript denoting a falling factorial. Note that for the basis $\{\tilde\phi_j\}$ we have
\begin{equation}\label{eq:ortho1}
\|\tilde\phi_j\|^2=\binom{N}{j}\|\phi_0\|^2
\end{equation}

a convenient scaling. We will see later that it is convenient to work with vacuum states $\phi_0$ which are not normalized.
\end{subsection}

\begin{subsection}{Tensoring}
In general, tensoring $\bf d_1$ with $\bf d_2$ yields a direct sum of irreducible representations, the \textsl{Clebsch-Gordan series}.
For the Boolean case, we only will do this for tensor products with $\bf 2$ for which the formula
$${\bf d}\otimes {\bf 2}=({\bf d-1})\oplus({\bf d+1})$$
holds. We will show explicitly how this decomposition works in general and we will use it in the next section to generate the su(2) content of $\B$. \bigskip

First we describe the construction in general. Suppose we have a reducible representation of the Lie algebra, with standard triple $\{R,L,H\}$.
Then to find the irreducible subrepresentations, find a linearly independent set of vacuum states, $\Omega_i$ satisfying $L\Omega_i=0$, $H\Omega_i=N_i\Omega_i$. 
Fix a vacuum state $\Omega_i$. Applying $R$ generates the basis for that subrepresentation, with $N_i$ being the smallest index $j$ 
such that $R^{j+1}\Omega_i=0$. Iterating through the vacuum states thus generates the representation as a sum of irreducible representations. \bigskip

For ${\bf d}\otimes {\bf 2}$, with $d=N+1$, let $r_N$, $l_N$, $h_N$ denote a standard triple for the representation with
principal number $N$, $r_1$, $l_1$, $h_1$ correspondingly for the $\bf 2$. Then the combined system has standard triple
$$R=r_N+r_1 \,,\qquad L=l_N+l_1 \,,\qquad H=h_N+h_1$$
with all $1$-indexed operators commuting with all $N$-indexed ones. Explicitly written out using tensor notation, we have
$r_N\leftrightarrow r_N\otimes I$, $r_1\leftrightarrow I\otimes r_1$, etc. Let $\{\phi_j\}$ denote the basis for the $\bf d$, and $\{\psi_0,\psi_1\}$ for
the $\bf 2$. \bigskip

For the first subrepresentation, take $\Omega=\phi_0\otimes\psi_0$. For simplicity of notation, we will drop explicit tensor signs, thus
$\Omega=\phi_0\psi_0$. With $r_1^2=0$, we have a representation with principal number $N+1$ with basis $f_j(N+1)$. We calculate
$$f_j(N+1)=R^j\Omega=(r_N^j+jr_N^{j-1}r_1)\Omega=\phi_j\psi_0+j\phi_{j-1}\psi_1$$
We check, with $h_N\phi_j=(N-2j)\phi_j$, $h_1\psi_0=\psi_0$, $h_1\psi_1=-\psi_1$,
$$Hf_j=(h_N+h_1)f_j=(N-2j+1)\phi_j\psi_0+(N-2j+2-1)j\phi_{j-1}\psi_1=(N-2j+1)f_j$$
And
$$R^{N+2}\Omega=(r_N^{N+2}+(N+2)r_N^{N+1}r_1)\phi_0\psi_0=0$$
checks. \bigskip

For the second subrepresentation, take 
\begin{equation}\label{eq:omeganew}
\Omega_{\text{new}}=(r_N-Nr_1)\Omega=\phi_1\psi_0-N\phi_0\psi_1
\end{equation}
and check 
\begin{align*}
L\Omega_{\text{new}}&=(l_N+l_1)(\phi_1\psi_0-N\phi_0\psi_1)=(N+0)\phi_0\psi_0-N(0+1)\phi_0\psi_0=0\\
H\Omega_{\text{new}}&=(N-2+1)\phi_1\psi_0-(N-1)(N\phi_0\psi_1)=(N-1)\Omega_{\text{new}}
\end{align*}
Here the basis takes the form
$$g_j(N-1)=\phi_{j+1}\psi_0-(N-j)\phi_j\psi_1$$
which is readily seen to vanish for $j=N$. \bigskip

It is convenient to use the rescaled bases. Using $(R^j/j!)$ to generate the basis elements we have
\begin{subequations}\label{eq:nminusone}
\begin{align}
\tilde f_j(N+1)&=\tilde\phi_j\psi_0+\tilde\phi_{j-1}\psi_1\\
\tilde g_j(N-1)&=(j+1)\tilde\phi_{j+1}\psi_0-(N-j)\tilde\phi_j\psi_1 \label{eq:nminusoneb}
\end{align}
\end{subequations}

\begin{remark} For orthogonality, as noted above, the states generated by $R$ will be orthogonal as long as
$L$ is adjoint to $R$. Checking $\langle f_j,g_{j-1}\rangle$, one sees directly that the $f_i$'s are orthogonal to the $g_j$'s. For the norms, 
$$\|\tilde f_j\|^2=\binom{N+1}{j}\|\phi_0\|^2$$
consistent with Pascal's triangle, calculating the norm-squared of each term. For $g_j$, first we need
$$\|\Omega'\|^2=\|\phi_1\|^2+N^2\|\phi_0\|^2=N(N+1)\|\phi_0\|^2$$
Then we have
$$\|\tilde g_j\|^2=N(N+1)\binom{N-1}{j}\|\phi_0\|^2$$
which agrees with the calculation using the above expression for $\tilde g_j$.
\end{remark}

\end{subsection}

\section{su(2) content of $\B$}
There are two approaches to detailing the su(2) content of $\B$. 

\begin{subsection}{Global approach}
First we start with a global representation and decompose it.
Set
$$T=\sum_{i=1}^n\hat e_i \,,\qquad T^*=\sum_{i=1}^n\hat \delta_i$$
With $U=[T^*,T]$ the commutator, Proposition \ref{prop:LA} shows that we have a standard triple:
$$[T^*,T]=U\,,\quad [T,U]=2T \,,\quad [U,T^*]=2T^*$$
From the previous section, we see that this is the tensor product of $n$ copies of $\bf 2$. We can find the vacuum states
from a basis for the nullspace of $T^*$ and generate the representation by applying $T$ in each subrepresentation.
We already know the natural basis, $\{e_\I\}$. The goal is to find an orthogonal decomposition into irreducible subrepresentations. \bigskip

Let us define the \textsl{layer operator} or number operator, $\L$, indicating the number of elements of a set:
$$\L e_\I=\ell\,e_\I\,, \qquad \text{ for } e_\I\in\V_\ell$$
We identify it with the matrix $\L$ having entries
$$ \L_{\I\J}=\begin{cases} \ell, & \text{ if } \I=\J \in\B_\ell \\
0,& \text{ otherwise }
\end{cases}
$$
Note that $\L$ satisfies the commutation relations
$$[\L,T]=T \ ,\qquad [T^*,\L]=T^*$$
In terms of the $\hat{}$ operators, acting on $e_\I$ we have the global definition
\begin{equation}\label{eq:layer}
\L=\sum_i\hat e_i\hat \delta_i
\end{equation}
Then equation \eqref{eq:fermi} yields
\begin{equation}\label{eq:nlayer}
\sum\hat \delta_i\hat e_i=\sum (I-\hat e_i\hat \delta_i)=n\,I-\L
\end{equation}

Now consider the action of $U$. Observe that $\V_\ell$ is invariant under $TT^*$ and $T^*T$.

\begin{proposition} \label{prop:js}
On $\V_\ell$, we have
\begin{align*}
TT^*&=\ell \,I+\JS{1}{n\ell} \\
T^*T&=(n-\ell)\,I+\JS{1}{n\ell}
\end{align*}
\end{proposition}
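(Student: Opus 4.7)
My plan is to compute the action of $TT^*$ and $T^*T$ directly on the natural basis vector $e_\I$ with $\I\in\B_\ell$, and then read off the matrix in the layer $\V_\ell$.

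Starting from $T^*e_\I=\sum_{j\in\I}e_{\I\setminus\{j\}}$, I apply $T=\sum_i\hat e_i$. A term $\hat e_i\,e_{\I\setminus\{j\}}$ is nonzero only when $i\notin\I\setminus\{j\}$, i.e.\ when either $i=j$ or $i\notin\I$. I will split the resulting double sum into these two cases:
$$TT^*e_\I=\sum_{j\in\I}e_\I\;+\;\sum_{\substack{j\in\I\\ i\notin\I}}e_{(\I\setminus\{j\})\cup\{i\}}.$$
The first sum contributes $\ell\,e_\I$, which is the $\ell I$ piece. In the second sum each index $(j,i)$ with $j\in\I$, $i\notin\I$ produces a set $\K=(\I\setminus\{j\})\cup\{i\}\in\B_\ell$ with $|\I\cap\K|=\ell-1$, and conversely every $\K\in\B_\ell$ at Johnson distance $1$ from $\I$ arises from a unique such $(j,i)$ (the unique element in $\I\setminus\K$ and the unique element in $\K\setminus\I$). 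Hence this sum is precisely $\JS{1}{n\ell}e_\I$, which gives the first formula.

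For $T^*T$, the cleanest route is the parallel computation: $Te_\I=\sum_{i\notin\I}e_{\I\cup\{i\}}$, and applying $T^*=\sum_j\hat\delta_j$ is nonzero only when $j\in\I\cup\{i\}$, i.e.\ $j=i$ or $j\in\I$. The diagonal terms contribute one copy of $e_\I$ for each $i\notin\I$, giving $(n-\ell)e_\I$, while the off-diagonal terms again enumerate each Johnson-$1$ neighbor exactly once, producing another $\JS{1}{n\ell}e_\I$. Alternatively, one can deduce the second identity from the first using the commutation relation $[T^*,T]=\sum\hat h_i=nI-2\L$, which on $\V_\ell$ gives $T^*T-TT^*=(n-2\ell)I$, hence $T^*T=(n-\ell)I+\JS{1}{n\ell}$.

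There is no real obstacle here; the only thing to watch is the bookkeeping in the off-diagonal sum, namely verifying the bijection between pairs $(j,i)$ with $j\in\I$, $i\notin\I$ and sets $\K\in\B_\ell$ at Johnson distance $1$ from $\I$, together with the observation that the case $i=j$ must be excluded from the off-diagonal sum (it is absorbed into the identity part).
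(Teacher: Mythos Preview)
Your proof is correct and follows essentially the same approach as the paper: split the double sum into diagonal and off-diagonal contributions, identify the diagonal part with the layer count ($\ell$ or $n-\ell$), and recognize the off-diagonal part as $\JS{1}{n\ell}$. The only stylistic difference is that the paper stays at the operator level, writing $TT^*=\sum_i\hat e_i\hat\delta_i+\sum_{i\ne j}\hat e_i\hat\delta_j$ and invoking $\L=\sum_i\hat e_i\hat\delta_i$ and $\sum_i\hat\delta_i\hat e_i=nI-\L$, whereas you compute the action on a fixed $e_\I$; the content is the same.
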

\begin{proof}
\begin{align*}
TT^*&=(\sum\hat e_i)(\sum\hat \delta_j)=\sum_i\hat e_i\hat \delta_i+\sum_{i\ne j}\hat e_i\hat \delta_j\\
T^*T&=(\sum\hat \delta_i)(\sum\hat e_j)=\sum_i\hat \delta_i\hat e_i+\sum_{i\ne j}\hat e_i\hat \delta_j
\end{align*}
Now identify 
$$\JS{1}{n\ell}=\sum_{i\ne j}\hat e_i\hat \delta_j$$
as the operator pairing sets where exactly one element of a given set is replaced to produce the other one. \hfill
\end{proof}

Subtracting, we see the action of $U$:

\begin{corollary} $U$ has entries
$$ U_{\I\J}=\begin{cases} n-2\ell, & \mathrm{ if }\  \I=\J \in\B_\ell  \\
0,& \mathrm{ otherwise }
\end{cases}
$$
\end{corollary}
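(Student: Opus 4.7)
The plan is immediate from Proposition \ref{prop:js} combined with the definition $U = [T^*,T] = T^*T - TT^*$.

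First I would observe that each layer $\V_\ell$ is invariant under both $TT^*$ and $T^*T$: the operator $T$ raises layer index by one and $T^*$ lowers it by one, so each of the two compositions maps $\V_\ell$ to $\V_\ell$. Consequently $U$ preserves layers, and therefore the matrix entry $U_{\I\J}$ vanishes whenever $\I$ and $\J$ are in different layers. This takes care of the ``otherwise'' branch of the claim, modulo showing that within a single layer $U$ is already diagonal.

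Next, on $\V_\ell$ I subtract the two identities of Proposition \ref{prop:js}:
\begin{align*}
U &= T^*T - TT^* \\
&= \bigl((n-\ell)\,I + \JS{1}{n\ell}\bigr) - \bigl(\ell\,I + \JS{1}{n\ell}\bigr) \\
&= (n-2\ell)\,I .
\end{align*}
The off-diagonal $\JS{1}{n\ell}$ terms, which encode the ``replace one element'' pairings, cancel exactly, and $U$ acts as the scalar $n - 2\ell$ on all of $\V_\ell$. In particular $U$ is diagonal in the natural basis with eigenvalue $n-2\ell$ on each $e_\I$ with $|\I| = \ell$, which is exactly the formula claimed.

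There is no real obstacle here; the entire content is in Proposition \ref{prop:js}, and the corollary is just the arithmetic $T^*T - TT^* = (n-\ell) - \ell = n-2\ell$ on each layer, together with the layer-preserving observation for the off-diagonal case. One might remark, as a sanity check, that this is consistent with the local formula $\hat h_i = I - 2\hat e_i\hat\delta_i$ summed over $i$: using \eqref{eq:layer}, $\sum_i \hat h_i = nI - 2\L$, which on $\V_\ell$ equals $(n-2\ell)I$, matching $U$ exactly.
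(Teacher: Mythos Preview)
Your proof is correct and follows essentially the same approach as the paper: the corollary is obtained simply by subtracting the two identities of Proposition \ref{prop:js}, so that the $\JS{1}{n\ell}$ terms cancel and $U$ acts as $(n-2\ell)I$ on $\V_\ell$. Your added remark that $\sum_i \hat h_i = nI - 2\L$ gives a consistency check is exactly the observation the paper records immediately after the corollary.
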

From the proof we have
$$U=\sum_i\left(\hat \delta_i\hat e_i-\hat e_i\hat \delta_i\right)=\sum_i\hat h_i$$
comparing with equation \eqref{eq:bose}. This verifies that it acts diagonally, constant on layers. 
The relation 
$$ U=n\,I-2\L$$
derived above follows as well by summing over $i$ in \eqref{eq:bose}.
\end{subsection}

\begin{subsection}{Constructive approach}
Now we will  build the representation step by step starting with $\B(1)$, tensoring with 
$\bf 2$ at each stage, with $\psi_0=1$ at all steps, and $\psi_1=e_{n+1}$ at the stage building $\B(n+1)$ from $\B(n)$.
The raising operator $r_N=e_1+\cdots+e_n$ for each component $N\to N\pm1$ in the transition from $\B(n)$ to $\B(n+1)$.
This approach shows us the content --- irreducible components with their multiplicities --- 
and we construct the vacuum states directly at each stage. \bigskip

Fix $n$. The vacuum states for $\B(n)$ are built inductively. The vacuum states are in one-one correspondence with length-$(n+1)$ sequences
of nonnegative integers where the next differs by 1 from the previous one, with initial $0$. For example, $0123212$ for $n=6$. These
correspond to lattice paths in the plane connecting $(0,0)$, $(1,1)$, $(2,N_2)$, $\ldots, (m,N_m),\ldots$ . The numbers in the sequence
$N_0N_1N_2N_3\ldots$ are the successive values of the 
principal number $N$ which change to $N\pm1$ when the representation is tensored with $\bf 2$. The last number in the sequence is the
value of $N$ in the corresponding subrepresentation in $\B(n)$. \bigskip

\begin{remark} This family of lattice paths are called \textit{Dyck paths}. Sometimes Dyck paths must end at level 0. Here we 
need to allow for any final level after $n$ steps. We refer to a transition from some level $N$ to $N+1$ as an \textit{ascent} and a transition
from some level $N$ to $N-1$ as a \textit{descent}.
\end{remark}

Each descent introduces
a factor corresponding to the vacuum state of the subrepresentation $N\to N-1$ at that stage. It has the form
$$\Omega'=(r_N-Nr_1)\Omega$$
where at stage $m$,
$$r_N=e_1+\cdots+e_m\,,\qquad r_1=e_{m+1}\ .$$
Thus, each downward step on such a path from the point $(m,N_m)$ to the next one, $(m+1,N_m-1)$, contributes a factor of
\begin{equation}\label{eq:descentfactor}
e_1+\cdots+e_m-N_me_{m+1}
\end{equation}
with each upward step contributing a factor of 1, leaving the current state unchanged.  \bigskip

If a vacuum state in $\B(n)$
is at level $\ell$, then, on that state, $U=N-2j=N=n-2\ell$ and the successive states of the subrepresentation are of the form $T^j\phi_0$, 
$0~\le~ j~\le~n-2\ell$, ending with $\phi_N$ at level $n-\ell$.  Let $\alpha$ denote the number of descents in the Dyck path labelling a given vacuum state. 
Then the possible values of $N$ are $N=n-2\alpha$, where $0\le\alpha\le n/2$. 
All of the vacua for the subrepresentations having the same $N$ are at level $\alpha$.\bigskip

The raising operator $\displaystyle T=\sum_{1\le i\le n} \hat e_i$ is global, i.e., it is the raising
operator for each of the subrepresentations. Similarly $T^*$ is the global lowering operator. \bigskip

On any subrepresentation with principal number $N$, we have the basis $\phi_j$ with the action, cf. equation \eqref{eq:su2},
\begin{equation}\label{eq:su2action}
T\phi_j=\phi_{j+1}\,,\qquad T^*\phi_j=j(N+1-j)\phi_{j-1}\,,\qquad U\phi_j=(N-2j)\phi_j
\end{equation}

\begin{definition}
We define the
\textit{$Z$-basis} to be the basis generated by the constructive procedure indicated above.
\end{definition}

\begin{example}
The two vacuum states at level 2 for $n=4$ have principal number $N=n-2\ell=0$  with sequence labels and states
\begin{align*}
01210 &\longrightarrow  (e_1+e_2-2e_3)(e_1+e_2+e_3-e_4) \\
01010 & \longrightarrow (e_1-e_2)(e_1+e_2+e_3-e_4)
\end{align*}
For new vacuum states appearing for $\B(n)$,
notice that each descent from $(m,N_m)$ to $(m+1,N_m-1)$ contributes a factor of $e_{m+1}$ to the top term, the one term with $e_n$ as a factor.
Thus, that term is a record of the descents. Referring to the tables  in the Appendix, consider the three new vacuum states at level 2 for $n=5$. We have
top terms $e_2e_5$, $e_3e_5$, and $e_4e_5$ corresponding to the paths/sequences with associated products
\begin{align*}
010121 &\longrightarrow  (e_1-e_2)(e_1+e_2+e_3+e_4-2e_5) \\
012121 &\longrightarrow  (e_1+e_2-2e_3)(e_1+e_2+e_3+e_4-2e_5) \\
012321 &\longrightarrow  (e_1+e_2+e_3-3e_4)(e_1+e_2+e_3+e_4-2e_5) 
\end{align*}
and so on for higher $n$.

\end{example}
\begin{subsubsection}{Number of vacua per layer}
\begin{proposition}There are 
$\displaystyle \binom{n}{\ell}-\binom{n}{\ell-1}$ vacuum states at level $\ell\le n/2$.
\end{proposition}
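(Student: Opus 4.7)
The plan is to identify vacuum states at level $\ell$ with $\ker(T^*|_{\V_\ell})$ and compute this kernel via a rank-nullity argument for the adjoint pair $(T, T^*)$. A vacuum state is by definition a nonzero $\Omega \in \V_\ell$ with $T^*\Omega = 0$; the weight condition $U\Omega = (n-2\ell)\Omega$ is automatic on $\V_\ell$ by the preceding corollary. So the proposition amounts to
$$\dim \ker(T^*|_{\V_\ell}) = \binom{n}{\ell} - \binom{n}{\ell-1} \qquad \text{for } \ell \le n/2.$$

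The first step is to show that $T : \V_{\ell-1} \to \V_\ell$ is injective whenever $\ell \le n/2$. Using the sl(2) decomposition supplied by the constructive procedure, each irreducible subrepresentation has its vacuum at some level $\ell_0 \le n/2$ and its top-weight vector $\phi_N$ at level $n - \ell_0 \ge n/2$; on that component the raising operator $T$ acts by $\phi_j \mapsto \phi_{j+1}$ and annihilates only $\phi_N$. Since $\ell - 1 < n/2$, no component has reached its top at that level, so $T$ has no kernel on $\V_{\ell-1}$. (Equivalently, by \eqref{eq:su2action}, $T^*T\phi_j = (j+1)(N-j)\phi_j$ is nonzero whenever $j < N$, which is the condition at all levels strictly below $n/2$.)

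For the second step, because $T^*$ is the adjoint of $T$ on $\V$, we have the orthogonal decomposition $\V_\ell = \mathrm{im}(T|_{\V_{\ell-1}}) \oplus \ker(T^*|_{\V_\ell})$. Injectivity from step one yields $\dim \mathrm{im}(T|_{\V_{\ell-1}}) = \dim \V_{\ell-1} = \binom{n}{\ell-1}$, and subtracting from $\dim \V_\ell = \binom{n}{\ell}$ produces the formula.

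The main subtlety is a potential circularity, since the sl(2) decomposition invoked in step one was itself constructed using the very vacuum states being counted. The cleanest way to close the loop is to verify, independently of the representation-theoretic count, that the constructive procedure produces $\binom{n}{\ell} - \binom{n}{\ell-1}$ linearly independent vacua at level $\ell$: they are labeled by length-$n$ Dyck-type paths with $\ell$ descents ending at height $n-2\ell$, and the count is the classical ballot number obtained via the reflection principle. The rank-nullity bound from step two then forces this count to be exact. Linear independence of the explicit products in \eqref{eq:descentfactor} follows from the observation already made in the preceding example, that the ``top term'' of each labeled vacuum, that is, the unique monomial ordered by the descent pattern, is distinct across different paths.
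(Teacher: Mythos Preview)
The paper's proof is quite different from yours: it is a short induction on $n$ that directly tracks the constructive procedure. In passing from $\B(n)$ to $\B(n+1)$, each vacuum at level $\ell$ yields one vacuum at level $\ell$ (ascent, $N\to N+1$) and one at level $\ell+1$ (descent, $N\to N-1$); hence the vacua at level $\ell$ in $\B(n+1)$ come from levels $\ell$ and $\ell-1$ in $\B(n)$, and Pascal's identity closes the induction. No rank--nullity, no injectivity of $T$, no reflection principle.

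Your rank--nullity route is attractive but, as you yourself flag, the injectivity step is circular here: at this point in the paper the only sl(2) decomposition available is the one built from the very vacua you are counting. Your patch does not actually close the loop. The orthogonal splitting $\V_\ell=\mathrm{im}(T)\oplus\ker(T^*)$ gives only the \emph{lower} bound $\dim\ker(T^*|_{\V_\ell})\ge\binom{n}{\ell}-\binom{n}{\ell-1}$ until injectivity is established, and the explicit Dyck-path vacua give the \emph{same} lower bound. Two lower bounds do not combine to force equality, so the sentence ``the rank--nullity bound from step two then forces this count to be exact'' does not parse. What your patch \emph{does} supply is an independent proof of the proposition in its combinatorial reading: the vacua are in bijection with nonnegative lattice paths with $\ell$ descents, and the ballot/reflection count gives $\binom{n}{\ell}-\binom{n}{\ell-1}$ directly. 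That is a legitimate alternative to the paper's induction, but then the rank--nullity discussion is superfluous.

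If you want the rank--nullity argument to stand on its own, you must prove injectivity of $T\colon\V_{\ell-1}\to\V_\ell$ for $\ell\le n/2$ without invoking the decomposition. This can be done, for instance by an independent induction on $n$ using the Kronecker-product realization $T=R\otimes I+I\otimes T_{n-1}$ (this is essentially the argument behind the Sperner property of $\B$; cf.\ the reference to Proctor~\cite{PR}). Once that is in hand, your approach gives the stronger conclusion $\dim\ker(T^*|_{\V_\ell})=\binom{n}{\ell}-\binom{n}{\ell-1}$, which also shows the constructive vacua span the full kernel---something the paper only establishes later, in the orthogonality subsection.
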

\begin{proof} Proceeding by induction, for $n=1$, we have $\ell=0$, with the vacuum state $e_\emptyset=1$.
In the transition from $n$ to $n+1$, vacua at layer $\ell$ have $N=n-2\ell$ which go to $N+1=n+1-2\ell$ and
$$N-1=n-1-2\ell=n+1-2(\ell+1)\ .$$
Thus, vacua at layer $\ell$ come from layers $\ell$ and $\ell-1$ of $\B(n)$. So we must check the relation
\begin{align*}
\binom{n+1}{\ell}-\binom{n+1}{\ell-1}&=\binom{n}{\ell}-\binom{n}{\ell-1}+\binom{n}{\ell-1}-\binom{n}{\ell-2}\\
&=\binom{n}{\ell}+\binom{n}{\ell-1}-\left[\binom{n}{\ell-1}+\binom{n}{\ell-2}\right]\\
&=\binom{n+1}{\ell}-\binom{n+1}{\ell-1}
\end{align*}
as required, via Pascal's triangle.
\end{proof}
\end{subsubsection}
\begin{subsubsection}{Orthogonality}
We will check that the $Z$-basis is orthogonal, thus, by scaling, the transformation from the natural basis to the $Z$-basis is unitary. \bigskip

First, note that the operator $U$ shows that states from different layers are orthogonal. Next, by construction, vacuum states in a given layer
are orthogonal. We compare states in two different irreducible subrepresentations as follows. Note that if $\Omega$ is a vacuum state with
principal number $N$, then
$$e^{tT}\Omega=\sum_{j=0}^N \frac{t^j}{j!}\, \phi_j$$
for the corresponding states $\phi_j$. We refer to the Leibniz Rule, proved below, which provides the relation
$$e^{tT^*}e^{aT}=\exp\left(\frac{aT}{1+at}\right)\,(1+at)^U\,\exp\left(\frac{tT^*}{1+at}\right)$$
So, consider, for $\Omega,\Omega'$ two vacua in layer $\ell$,
\begin{align*}
\langle e^{tT}\Omega,e^{sT}\Omega'\rangle&=\langle \Omega,e^{tT^*}e^{sT}\Omega'\rangle\\
&=\langle\Omega,\exp\left(\frac{sT}{1+st}\right)\,(1+st)^U\,\Omega' \rangle\\
&=(1+st)^{n-2\ell}\,\langle\Omega,\Omega'\rangle\\
\end{align*}
since, by moving across the inner product,  the exponential-$T$ terms as well as the exponential-$T^*$ terms act like the identity on vacua.
Since the result is a function of the product $st$, the corresponding states for differing values of $j$ are orthogonal. And for states in
different subrepresentations, the orthogonality of $\Omega$ and $\Omega'$ yield orthogonality of all states in different subrepresentations.
For the inner product of $\phi_j$ with itself, we recover equation \eqref{eq:ortho1}.
\end{subsubsection}\bigskip

We make the observation that the operators $$\{TT^*,T^*T,U,\L,T^2{T^*}^2,\ldots,T^k{T^*}^k,\ldots\}$$ act diagonally on the $Z$-basis. 
Hence they generate a commutative algebra with each $\V_\ell$ an invariant subspace.
\end{subsection}\bigskip

See the charts in the Appendix for values of $TT^*$ and for the explicit form of the states for values of $n$ from 1 to 5. The states in these
charts are $\tilde\phi_j=\phi_j/j!$ in each subrepresentation, i.e., $\tilde\phi_j=(T^j/j!)\phi_0$.

\begin{subsection}{Labelling the states}
We have two basic ways to look at the su(2) states in $\B$. One set of labels $|nNij\rangle$ denotes the $j^{\rm th}$ state in the
$i^{\rm th}$ column (of those) with principal number $N$, counting from the left, in $\B(n)$. \bigskip

Another set of labels is $|n\ell k\rangle$, which indicates the $k^{\rm th}$ state from the left in layer $\ell$ in $\B(n)$. We are 
numbering the layers starting with $0$ at the top to $n$ at the bottom.

\begin{figure}[ht!]
\begin{center}
\scalebox{.5}{\input{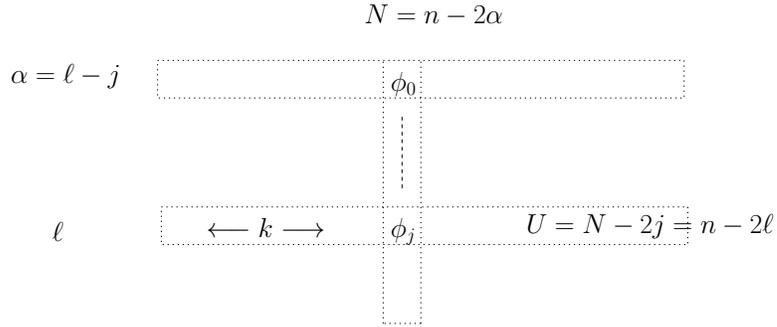}}
\caption{State labels: $|nNij\rangle=|n\ell k\rangle$}\label{fig:labelling}
\end{center}
\end{figure}

Given $N$, $i$, and $j$, find $\ell$ from the fact that $U$ acts globally, so that at level $j$ in any subrepresentation with principal number $N$, we have
$$U=n-2\ell=N-2j\ .$$
With $N=n-2\alpha$, the above relation yields $\alpha=\ell-j$ corresponding to  blocks of subrepresentations with the same principal number $N$. 
The number $\alpha$ is thus the level of the vacuum state for the representation from the point of view of the state $\phi_j$. Then 
\begin{align*}
&&&k=i+\binom{n}{\alpha-1}\,,&&\text{where $i$ runs from $1$ to }\binom{n}{\alpha}-\binom{n}{\alpha-1}\ .
\end{align*}
Note that there are $\displaystyle \binom{n}{\ell}-\binom{n}{\ell-1}$ vacua in layer $\ell\le n/2$, with $\alpha=\ell$ for vacuum states. \bigskip

Given $\ell$ and $k$, we have
$$\alpha=\max\{\alpha'\colon \binom{n}{\alpha'-1}< k\}$$
And
$$i=k-\binom{n}{\alpha-1}\,,\qquad N=n-2\alpha\,,\qquad j=\ell-\alpha$$
fill out the corresponding state label.
\bigskip
\end{subsection}

\begin{subsection}{$\alpha$-chains}
Now, start with a vacuum state $\phi_0$ at level 
$$\ell_{\text{initial}}=\alpha=\half(n-N)\ .$$
Then the successive states run through levels $\alpha+1$ to $\alpha+N$. At that level, $j=N$, and
$$U=N-2j=-N=n-2\ell_{\text{final}}$$
or 
$$\ell_{\text{final}}=\half\,(n+N)=n-\ell_{\text{initial}}\ .$$
So the states form a chain symmetric about the middle, corresponding to the layers. \bigskip

Define a partial order on the $Z$-basis by
$$\phi \le \psi \quad\text{if}\quad \psi=T^k\phi$$ 
for some $k\ge0$. 
The states for an irreducible subrepresentation form a chain.
It is convenient to refer to the set of states satisfying $\phi_{j+1}=T\phi_j$, with $T^*\phi_0=0$, $\phi_0$ in level $\alpha$, as an 
\textit{$\alpha$-chain}. These chains are transversal to the layers. 

\begin{subsubsection}{Complementation}
Looking at the charts in the Appendix, one observes that states in the same chain on complementary levels $\ell\leftrightarrow n-\ell$ are complementary
states up to sign, we will show that, in fact,
\begin{proposition}
$$\tilde\phi_j'=(-1)^\alpha \tilde\phi_{N-j}$$
\end{proposition}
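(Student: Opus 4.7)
The plan is to first reduce the claim to the case $j=0$, i.e.\ to showing $\phi_0' = (-1)^\alpha \tilde\phi_N$, and then prove that reduced statement by induction on $n$ using the constructive (Dyck-path) procedure. For the reduction, iterating the complementation identities \eqref{eq:comp} yields $(T^j\phi_0)' = T^{*j}(\phi_0')$, while the $T^*$-action $T^*\tilde\phi_k = (N+1-k)\tilde\phi_{k-1}$ telescopes to $T^{*j}\tilde\phi_N = j!\,\tilde\phi_{N-j}$. Dividing by $j!$ then converts $\phi_0' = (-1)^\alpha\tilde\phi_N$ into the full formula $\tilde\phi_j' = (-1)^\alpha\tilde\phi_{N-j}$.

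The base case $n=0$ is immediate ($\phi_0 = 1$, $\alpha = N = 0$). For the inductive step, write $'$ and $'_{n-1}$ for complementation in $\{1,\ldots,n\}$ and $\{1,\ldots,n-1\}$ respectively; the two splitting rules
\[
\phi' \;=\; \phi'_{n-1}\,e_n \quad(\text{if $\phi$ is free of $e_n$}) \qquad\text{and}\qquad (\phi\, e_n)' \;=\; \phi'_{n-1}
\]
drive the argument. By the constructive procedure, every vacuum of $\B(n)$ arises from $\B(n-1)$ in exactly one of two ways. In the ascent case, $\phi_0 = \Omega$ for some vacuum $\Omega\in\B(n-1)$ of principal number $M$ at level $\alpha$; the new $\B(n)$-subrepresentation has principal number $N=M+1$ and, by \eqref{eq:nminusone} at $j=N$, top state $\tilde\phi_M^{(n-1)}\,e_n$. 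Then $\Omega' = \Omega'_{n-1}\,e_n$, which by the inductive hypothesis equals $(-1)^\alpha\tilde\phi_M^{(n-1)}\,e_n$, matching the target. In the descent case, $\phi_0 = \tilde\phi_1^{(n-1)} - M\tilde\phi_0^{(n-1)}\,e_n$ by \eqref{eq:nminusoneb}, with $N = M-1$ and $\alpha^{(n)} = \alpha^{(n-1)} + 1$; the new subrepresentation's top state is $M\tilde\phi_M^{(n-1)} - \tilde\phi_{M-1}^{(n-1)}\,e_n$. Applying both splitting rules and the inductive hypothesis to $\tilde\phi_0^{(n-1)}$ and to $\tilde\phi_1^{(n-1)}$ (which share the sign $(-1)^{\alpha^{(n-1)}}$, lying in the same $\B(n-1)$-subrepresentation) yields
\[
\phi_0' \;=\; (-1)^{\alpha^{(n-1)}+1}\bigl(M\tilde\phi_M^{(n-1)} - \tilde\phi_{M-1}^{(n-1)}\,e_n\bigr),
\]
which is $(-1)^{\alpha^{(n)}}$ times the claimed top state.

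The main obstacle is the bookkeeping in the descent case: one must carefully distinguish the two complement operations and invoke the inductive hypothesis not only on the vacuum $\tilde\phi_0^{(n-1)}$ but also on its raised partner $\tilde\phi_1^{(n-1)}$ within the same $\B(n-1)$-subrepresentation, making sure both carry the common sign $(-1)^{\alpha^{(n-1)}}$ from the reduction step applied one level down.
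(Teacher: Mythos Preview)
Your proof is correct and follows essentially the same approach as the paper: reduce the general statement to the vacuum case $j=0$ via the complementation identities \eqref{eq:comp} and the $T^*$-action on $\tilde\phi_N$, then establish that case by induction along the Dyck path, tracking the sign change at each descent. The only organizational differences are that you handle the ascent case explicitly (the paper leaves it implicit) and you bypass the paper's preliminary $U$-eigenvalue observation, which is not strictly needed once the inductive argument is in place.
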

The proof  will emerge in the following discussion. \bigskip

The action of $T$ and $T^*$ on the scaled states is
$$T\tilde\phi_j=(j+1)\tilde\phi_{j+1} \qquad \text{and}\qquad T^*\tilde\phi_{N-j}=(j+1)\tilde\phi_{N-j-1}$$
where it is convenient to look at the action of $T^*$ from the reverse point of view.  \bigskip

First, we extend eq.\,\eqref{eq:comp} to $U$:
\begin{align*}\label{eq:ucomp}
U\phi'&=(T^*T-TT^*)\phi'=T^*(T^*\phi)'-T(T\phi)'\\
&=(TT^*\phi)'-(T^*T\phi)'\\
&=-(U\phi)'\ .
\end{align*}
In a given $\alpha$-chain, the states are uniquely identified up to scale by the eigenvalue for $U$. And we have
$$U\phi_j'=-(U\phi_j)'=-(N-2j)\phi_j'=(N-2(N-j))\phi_j'$$
i.e., $\phi_j'$ is an eigenvector of $U$ with eigenvalue the same as $\phi_{N-j}$. Hence they differ at most by a scalar.
In particular, for $j=0$, let
\begin{equation}\label{eq:priming}
\phi_0=\epsilon\, \tilde\phi_N'
\end{equation}
Now,
\begin{align*}
\tilde\phi_j'&=((T^j/j!)\phi_0)'=((T^*)^j/j!)\phi_0'=\epsilon\,((T^*)^j/j!)\tilde\phi_N\\
&=\epsilon\,\tilde\phi_{N-j}\ .
\end{align*}
In other words, the states all have the same scalar factor
\begin{equation}\label{eq:epsilon}
\tilde\phi_j'=\epsilon\, \tilde\phi_{N-j}
\end{equation}
First, let's see that $\epsilon=\pm1$. Let $j=N$ in eq.\,\eqref{eq:epsilon}
$$\tilde\phi_N'=\epsilon\, \phi_0$$
Substitute into \eqref{eq:priming} to get
$$\phi_0=\epsilon^2 \phi_0$$
i.e., $\epsilon=\pm1$. \bigskip

Next, we  check that for every descent in the Dyck path corresponding to a vacuum state, the factor $\epsilon$ changes sign.
Recall equations \eqref{eq:omeganew} and \eqref{eq:descentfactor}. So a typical new vacuum looks like
$$\Omega_{\text{new}}=\phi_1-N\phi_0\,e_{m+1}$$
By induction, we have $\phi_0'=\epsilon\,\tilde\phi_N$ and $\phi_1'=\epsilon\, \tilde\phi_{N-1}$. Hence
$$\Omega_{\text{new}}'=\epsilon(\tilde\phi_{N-1}e_{m+1}-N\tilde\phi_N)=-\epsilon\,\tilde\phi_{N-1,\text{new}}$$
by eq.\,\eqref{eq:nminusoneb} with $\psi_1=e_{m+1}$ and $j=N-1$. Starting with $n=1$, $\epsilon=1$ yields the Proposition. \bigskip

\end{subsubsection}
\end{subsection}
\begin{subsection}{References}
See \cite[p.\,216]{BI} and \cite[p.\,174]{SST} for another approach to vacuum states. Also, \cite[\S\S 7.9ff]{GSL} for yet another approach.
\end{subsection}
\begin{subsection}{Notes on the Appendix}
As noted above, the first set of tables, called Boolean System, for $n=1$ through $n=5$, show the values of $TT^*$ and give the states in terms of the $e_i$ variables.
A table of values of the Casimir shows the overall structure for different values of $N$.
The second set of tables, called Boolean States, for $n=2$ through $n=6$, shows just the coefficients, i.e., the coordinates of the corresponding
states. For each level $\ell$, reading left to right in the corresponding Boolean System table, each state is written as a row in the corresponding matrix.
Since the states are not normalized, a second matrix is given with the squared norms of the rows. Consider $n=3$, level 2, the matrix of states
and matrix of squared norms are
$$ \begin{pmatrix}1&1&1\\ 0&1&-1\\ 2&-1&-1 \end{pmatrix} \;,\qquad \begin{pmatrix}3&0&0\\ 0&2&0\\ 0&0&6 \end{pmatrix}$$
For example, the middle row of the first matrix is the state $e_1e_3-e_2e_3$. 
For a given level, call the matrix of states $W$ and the diagonal matrix of squared norms $D$.
Then
$$ V=WD^{-1/2}$$
is a unitary matrix. The collection of such $V$'s provides the unitary transformation from the natural basis to an orthonormal system for $\B(n)$.
\end{subsection}

\section{Group elements}

For any nonzero number, $u$, we define $u^{\L}$ by its action on level $\ell$
$$
u^{\L} e_\I=u^\ell\,e_\I \,,\qquad \text{if } |\I|=\ell
$$

The group element $g(s,u,t)$ is defined as
$$g(s,u,t)=e^{sT}\,u^{\L}\,e^{tT^*}$$

\begin{theorem}\label{thm:grpels}
For the group elements we have
$$ (e^{sT}\,u^{\L}\,e^{tT^*})_{\I\J}=s^{|\I\,\setminus\, \J|}\,(u+st)^{|\I\cap \J|}\,t^{|\J\,\setminus\, \I|}$$
\end{theorem}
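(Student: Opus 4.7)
The strategy is to reduce the global calculation to commuting single-site contributions. First, I would observe that for $i\ne j$ the operators $\hat e_i,\hat\delta_i$ commute with $\hat e_j,\hat\delta_j$ (immediate from the action on $e_\I$, and also visible in the Kronecker-product realization in \S2), so the three factors $e^{sT}$, $u^{\L}$, $e^{tT^*}$ can be combined into a single product of commuting site pieces. Using $\hat e_i^2=\hat\delta_i^2=0$ together with the identity $u^P=I+(u-1)P$ valid for any projection $P$ (applied to the idempotent $\hat e_i\hat\delta_i$ from the corollary to Proposition \ref{prop:LA}), I obtain
\[
e^{sT}u^{\L}e^{tT^*}=\prod_{i=1}^n \bigl(I+s\hat e_i\bigr)\bigl(I+(u-1)\hat e_i\hat\delta_i\bigr)\bigl(I+t\hat\delta_i\bigr),
\]
where the regrouping that pulls the three site-$i$ factors together is justified by different-site commutativity.

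Next I would compute each single-site factor. On the two-dimensional space attached to index $i$ (``$i$ absent'' vs.\ ``$i$ present''), the operators $\hat e_i$, $\hat\delta_i$, and $\hat e_i\hat\delta_i$ are represented by the $2\times 2$ matrices $R$, $R^*$, $RR^*$ of \S3. A direct multiplication gives
\[
(I+sR)\bigl(I+(u-1)RR^*\bigr)(I+tR^*)=\begin{pmatrix}1 & t \\ s & u+st\end{pmatrix},
\]
with rows and columns indexed by $0$ (absent) and $1$ (present).

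Finally, since the full operator is realized as the tensor product of $n$ such blocks, its $(\I,\J)$-entry is the product over $i\in\{1,\ldots,n\}$ of the single-site entries in row $[i\in\I]$ and column $[i\in\J]$. A four-case reading of the $2\times 2$ matrix yields contributions $1$, $t$, $s$, or $u+st$ according as $i$ lies in $\I'\cap\J'$, $\J\setminus\I$, $\I\setminus\J$, or $\I\cap\J$; multiplying over all $i$ produces exactly $s^{|\I\setminus\J|}(u+st)^{|\I\cap\J|}t^{|\J\setminus\I|}$. The only substantive step is the opening factorization; once commuting-sites regrouping is in place, everything else is a $2\times 2$ multiplication and indexing bookkeeping, with no real obstacle.
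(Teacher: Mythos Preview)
Your proof is correct and takes a genuinely different route from the paper's. The paper expands $e^{sT}u^{\L}e^{tT^*}$ as a double sum $\sum_{a,b} s^a t^b\,(T^a/a!)\,u^{\L}\,((T^*)^b/b!)$ and evaluates the $(\I,\J)$-entry combinatorially: each nonzero contribution corresponds to an intermediate set $\K\subset\I\cap\J$ reached by first deleting $|\J\setminus\K|$ elements from $\J$ and then adjoining $|\I\setminus\K|$ elements; summing over the $\binom{k}{i}$ choices of $\K$ of each size inside $\I\cap\J$ produces the binomial $(u+st)^{|\I\cap\J|}$. Your argument instead exploits the site decomposition from the outset, factoring the whole operator as a commuting product of $2\times2$ blocks and reading the matrix entry off as a tensor product. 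This is cleaner and bypasses all counting; it is in fact the style the paper itself adopts later for Theorem~\ref{thm:exp}, where $e^{t(T+T^*)}$ is handled one site at a time. The paper's approach, by contrast, stays closer to the incidence-matrix interpretation of $T^k/k!$ in \eqref{eq:inclusion} and never invokes the Kronecker realization, so it is more self-contained relative to the combinatorial development of \S2.
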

\begin{proof}
A typical term in the expansion of the group element has the form
$$ s^a\,(T^a/a!)\,u^{\L}\,t^b\,((T^*)^b/b!)$$
\begin{figure}[ht!]
\begin{center}
\scalebox{.5}{\input{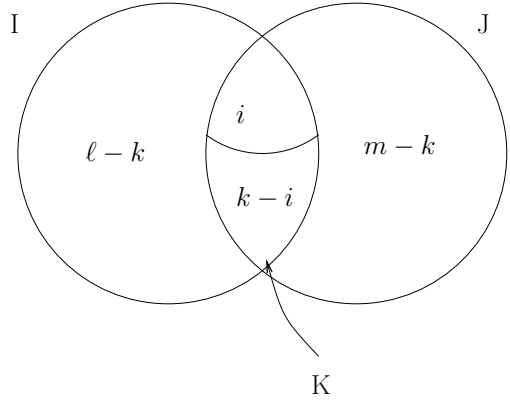}}
\caption{Configuration of sets for finding the group elements}\label{fig:grpels}
\end{center}
\end{figure}
Starting with $\I$ in layer $\ell$, we have a set $\K$ such that $\I\supset \K\subset \J$, i.e., $\K\subset \I\cap\J$.
Let $k=|\I\cap\J|$, $k-i=|\K|$. Then $a$, the difference in size between $\I$ and $\K$ must equal $\ell-k+i$ and, 
similarly, with $\J$ in layer $m$, $b=m-k+i$. There are $\displaystyle \binom{k}{i}$ such choices of $\K$ in layer $k-i$. So the terms contribute
$$\sum_i \binom{k}{i}\,s^{\ell-k+i}\,u^{k-i}\,t^{m-k+i}=s^{\ell-k}(u+st)^kt^{m-k}$$
Comparing with the figure shows that this agrees with the stated formula.
\end{proof}

\begin{subsection}{Specializations: Johnson scheme, Boolean poset, Hadamard-Sylvester matrices}
\begin{subsubsection}{Johnson scheme}
First, let $u=1$, $s=t$. Theorem \ref{thm:grpels} provides the formula
$$ (e^{tT}\,e^{tT^*})_{\I\J}=t^{|\I\,\DD\, \J|}\,(1+t^2)^{|\I\cap \J|}$$
Restricting to layer $\ell$,  if $\text{dist}_{J}(\I,\J)=j$, then $|\I\cap \J|=\ell-j$ and $|\I\,\DD\, \J|=2j$. So, on $\V_\ell$, we have the matrix elements equal to
$$t^{2j}(1+t^2)^{\ell-j}$$
Specialize to $t=1$. And we have, on $\V_\ell$,
$$2^{|\I\,\cap\, \J|}=2^{\ell-j}\,\JS{j}{n\ell}$$
That is, restricting to $|\I|=|\J|=\ell$, we have
$$e^{T}\,e^{T^*}\Bigr|_{\V_\ell}=\sum_{0\le j\le \min(\ell,n-\ell)} 2^{\ell-j}\,\JS{j}{n\ell}$$
Notice that this is a binary expansion with coefficients the Johnson matrices. So having computed $e^Te^{T^*}$, the Johnson matrices can quickly be
found by the same way the binary expansion of a number is found. First restrict to the $\binom{n}{\ell}\times\binom{n}{\ell}$ central submatrix
of $e^Te^{T^*}$ corresponding to $\V_\ell$. Now, iterate the steps: reduce mod 2, save the result, subtract it off, divide by 2, 
reduce mod 2, save the result, subtract, and so on. For $0\le\ell\le n/2$, the successive results are $\JS{\ell}{n\ell}$, $\JS{\ell-1}{n\ell}$, etc., 
to $\JS{0}{n\ell}$. Thus the Johnson basis is produced.  The procedure works as well for all $0\le \ell\le n$.\bigskip

\begin{example} Take $n=4$. We have
$$e^Te^{T^*}=
\left(\begin {array}{cccccccccccccccc} 1&1&1&1&1&1&1&1&1&1&1&1&1&1&1
&1\\\noalign{\medskip}1&2&1&1&1&2&2&2&1&1&1&2&2&2&1&2
\\\noalign{\medskip}1&1&2&1&1&2&1&1&2&2&1&2&2&1&2&2
\\\noalign{\medskip}1&1&1&2&1&1&2&1&2&1&2&2&1&2&2&2
\\\noalign{\medskip}1&1&1&1&2&1&1&2&1&2&2&1&2&2&2&2
\\\noalign{\medskip}1&2&2&1&1&4&2&2&2&2&1&4&4&2&2&4
\\\noalign{\medskip}1&2&1&2&1&2&4&2&2&1&2&4&2&4&2&4
\\\noalign{\medskip}1&2&1&1&2&2&2&4&1&2&2&2&4&4&2&4
\\\noalign{\medskip}1&1&2&2&1&2&2&1&4&2&2&4&2&2&4&4
\\\noalign{\medskip}1&1&2&1&2&2&1&2&2&4&2&2&4&2&4&4
\\\noalign{\medskip}1&1&1&2&2&1&2&2&2&2&4&2&2&4&4&4
\\\noalign{\medskip}1&2&2&2&1&4&4&2&4&2&2&8&4&4&4&8
\\\noalign{\medskip}1&2&2&1&2&4&2&4&2&4&2&4&8&4&4&8
\\\noalign{\medskip}1&2&1&2&2&2&4&4&2&2&4&4&4&8&4&8
\\\noalign{\medskip}1&1&2&2&2&2&2&2&4&4&4&4&4&4&8&8
\\\noalign{\medskip}1&2&2&2&2&4&4&4&4&4&4&8&8&8&8&16\end {array}
 \right)$$
The block for $\ell=2$ is
$$\left( \begin {array}{cccccc} 4&2&2&2&2&1\\\noalign{\medskip}2&4&2&2&
1&2\\\noalign{\medskip}2&2&4&1&2&2\\\noalign{\medskip}2&2&1&4&2&2
\\\noalign{\medskip}2&1&2&2&4&2\\\noalign{\medskip}1&2&2&2&2&4
\end {array} \right)$$
and the Johnson basis is immediate.
\end{example}
\end{subsubsection}

\begin{subsubsection}{Boolean poset}
Now specialize to $u=1$, $s=0$, $t=\pm1$. For $t=1$, we have the exponential of $T^*$. From equation \eqref{eq:inclusion} as well as the Theorem,
we have, calling $e^{T^*}$, $E$  in the present context,
$$E_{\I\J}=(e^{T^*})_{\I\J}=\begin{cases} 1, & \text{ if } \I\subset \J \\ 0,& \text{ otherwise }\end{cases}$$
That is, $E$ is the incidence matrix for the Boolean poset $\B(n)$. We immediately have the M\"obius matrix
$$M=E^{-1}=e^{-T^*}$$
For $v\in\V_\ell$, we have, for general $t$,
\begin{align*}
(Ev)_\I=(e^{tT^*}v)_\I&=\sum_{J\supset\I}t^{|\J\setminus\I|}v_\J\\
(Mv)_\I=(e^{-tT^*}v)_\I&=\sum_{J\supset\I}(-1)^{|\J\setminus\I|}t^{|\J\setminus\I|}v_\J
\end{align*}
which for $t=1$ gives the M\"obius inversion for the Boolean poset.  Many counting formulas, in particular
inclusion-exclusion, may be readily derived from this relation. Cf. \cite[p.\,237ff]{SST}.
\end{subsubsection}

\begin{subsubsection}{Sylvester-Hadamard matrices}
Recall the Sylvester-Hadamard matrices. They are an infinite family of Hadamard matrices built as follows.
Start with 
$$H_1=\begin{pmatrix} \hfill 1&\hfill 1\\  \hfill 1&-1\end{pmatrix}$$
then iteratively
$$H_{n+1}=H_1\otimes H_n$$
i.e., iterated Kronecker products of matrices, conventionally associating to the left. For example,
$$H_2=\left( \begin {array}{rrrr} 1&1&1&1\\\noalign{\medskip}1&-1&1&-1
\\\noalign{\medskip}1&1&-1&-1\\\noalign{\medskip}1&-1&-1&1\end {array}
 \right) \,,\quad H_3=
 \left( \begin {array}{rrrrrrrr} 1&1&1&1&1&1&1&1\\\noalign{\medskip}1&
-1&1&-1&1&-1&1&-1\\\noalign{\medskip}1&1&-1&-1&1&1&-1&-1
\\\noalign{\medskip}1&-1&-1&1&1&-1&-1&1\\\noalign{\medskip}1&1&1&1&-1&
-1&-1&-1\\\noalign{\medskip}1&-1&1&-1&-1&1&-1&1\\\noalign{\medskip}1&1
&-1&-1&-1&-1&1&1\\\noalign{\medskip}1&-1&-1&1&-1&1&1&-1\end {array}
 \right) $$

Clearly $H_n$ is $2^n\times 2^n$. Now label rows and columns of $H_1$ with bits. So
$$1=(H_1)_{00}=(H_1)_{10}=(H_1)_{01}=-(H_1)_{11}$$
Or, using $\alpha$, $\beta$ to denote the corresponding bit strings,
$$(H_1)_{\alpha\beta}=(-1)^{|\alpha\cap \beta|}=(-1)^{\alpha\cdot\beta}$$
with the $\cap$ denoting intersection of the corresponding sets and the dot denoting ordinary dot product.

The difference with our approach is that the iterated Kronecker products are indexed by binary strings, i.e., the subsets
are identified with their corresponding bit strings or bit vectors, whereas we have been ordering layer-by-layer. If we are given $X$
and take the product with $H_1$ the matrix elements are
$$(H_1\otimes X)_{i \alpha, j\beta}=(H_1)_{ij}X_{\alpha\beta}$$
so the value flips whenever $i=j$. So inductively we have
$$(H_n)_{\alpha\beta}=(-1)^{|\alpha\cap \beta|}=(-1)^{\alpha\cdot\beta}$$
for all $n\ge1$. \bigskip

Taking $u=-2$, $s=t=1$, we have
\begin{align*}
(e^T(-2)^{\L} e^{T^*})_{\I\J}&=1^{|\I\,\setminus\, \J|}\,(-1)^{|\I\cap \J|}\,1^{|\J\,\setminus\, \I|}\\
&=(-1)^{|\I\cap \J|}
\end{align*}
which is thus permutation equivalent to the corresponding Sylvester-Hadamard matrix.  \bigskip

Referring to the group law, Proposition \eqref{prop:grouplaw}, \S\ref{sec:grouplaw}, we have
the group element $g(s,u,t)=g(1,-2,1)$. By construction it is a symmetric matrix, so to check that it is Hadamard, we calculate its square.
Using the group law, we have
$$g(1,-2,1)g(1,-2,1)=2^n\,g(1+(-2/2),4/4,1+(-2/2))=2^n\,g(0,1,0)=2^n I$$
as expected.
\end{subsubsection}
\end{subsection}

\begin{subsection}{Leibniz rule}
The commutation rule moving $T^*$ past $T$ is an analog of the Leibniz rule in calculus commuting the operator of differentiation
past that of multiplication by $x$, i.e. moving the lowering operator past the raising operator. For su(2) this rule takes the form
stated in the following proposition.
\begin{proposition} {\sl Leibniz rule}\\
The matrix elements of both sides of
$$e^{tT^*}e^{aT}=\exp\left(\frac{aT}{1+at}\right)\,(1+at)^U\,\exp\left(\frac{tT^*}{1+at}\right)$$
are
$$(e^{tT^*}e^{aT})_{\I\J}=a^{|\I\,\setminus\,\J|}\,(1+at)^{|(\I\,\cup\,\J)'|}\,t^{|\J\,\setminus\,\I|}$$
the prime indicating complement.
\end{proposition}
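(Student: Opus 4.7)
The plan is to verify both the operator identity and the matrix element formula by direct computation, using the incidence matrix formula \eqref{eq:inclusion} for $T^k/k!$ on the left-hand side and Theorem \ref{thm:grpels} on the right-hand side.

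First I would compute the matrix element of the LHS directly. Expanding the exponentials term-by-term,
$$e^{tT^*}e^{aT} = \sum_{p,q\ge 0} t^p a^q \frac{(T^*)^p}{p!}\cdot \frac{T^q}{q!},$$
and applying this to $e_\J$: by \eqref{eq:inclusion}, $e^{aT}e_\J = \sum_{\K\supset\J} a^{|\K\setminus\J|} e_\K$, and then $e^{tT^*}e_\K = \sum_{\M\subset\K} t^{|\K\setminus\M|}e_\M$. Hence
$$(e^{tT^*}e^{aT})_{\I\J} \;=\; \sum_{\K\,\supset\,\I\cup\J} a^{|\K\setminus\J|}\,t^{|\K\setminus\I|}.$$
Now parametrize $\K$ by $\K = (\I\cup\J)\cup\mathrm{S}$ where $\mathrm{S}\subset (\I\cup\J)'$. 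Since $\K\setminus\J = (\I\setminus\J)\sqcup \mathrm{S}$ and $\K\setminus\I=(\J\setminus\I)\sqcup \mathrm{S}$, summing over $|\mathrm{S}|=j$ with $\binom{|(\I\cup\J)'|}{j}$ choices and factoring out $a^{|\I\setminus\J|}t^{|\J\setminus\I|}$ collapses the sum to $(1+at)^{|(\I\cup\J)'|}$ by the binomial theorem. This yields the stated closed form.

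Next I would verify the operator factorization by computing the matrix element of the RHS and matching. The RHS has the form of a group element as in Theorem \ref{thm:grpels}, except with $U$ in place of $\L$; since $U = nI - 2\L$ by equation \eqref{eq:nlayer}, we have $(1+at)^U = (1+at)^n\,(1+at)^{-2\L}$. Applying Theorem \ref{thm:grpels} with $s = a/(1+at)$, $u=(1+at)^{-2}$, $t\mapsto t/(1+at)$, and pulling out the scalar $(1+at)^n$, the central factor becomes
$$u+st = \frac{1}{(1+at)^2}+\frac{at}{(1+at)^2} = \frac{1}{1+at}.$$
Plugging in and collecting,
$$(1+at)^n\left(\frac{a}{1+at}\right)^{|\I\setminus\J|}\!\!\left(\frac{1}{1+at}\right)^{|\I\cap\J|}\!\!\left(\frac{t}{1+at}\right)^{|\J\setminus\I|}\!\!=\; a^{|\I\setminus\J|}t^{|\J\setminus\I|}(1+at)^{n-|\I\cup\J|},$$
using $|\I\setminus\J|+|\I\cap\J|+|\J\setminus\I|=|\I\cup\J|$. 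Since $n-|\I\cup\J|=|(\I\cup\J)'|$, this matches the LHS, establishing the operator identity after noting that matrix elements determine an operator.

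The computation itself is routine; the only real subtlety is recognizing the exponent in $(1+at)^U$ as producing the $(1+at)^n$ prefactor that reconciles the $|(\I\cup\J)'|$ exponent on the LHS with the $|\I\cap\J|$-type exponent delivered by Theorem \ref{thm:grpels}. The main expository obstacle is thus keeping track of the two normalizations simultaneously; once the substitution $s'=a/(1+at)$, $t'=t/(1+at)$ is made and $U$ is converted to $\L$, everything telescopes via the identity $|\I\setminus\J|+|\I\cap\J|+|\J\setminus\I|=|\I\cup\J|$.
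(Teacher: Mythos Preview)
Your proof is correct and follows essentially the same approach as the paper: compute the left-hand side matrix element by summing over sets $\K\supset\I\cup\J$ and collapsing via the binomial theorem, then verify the right-hand side by rewriting $(1+at)^U=(1+at)^n(1+at)^{-2\L}$ and invoking Theorem~\ref{thm:grpels} with the same substitutions $s'=a/(1+at)$, $u'=(1+at)^{-2}$, $t'=t/(1+at)$. Your parametrization $\K=(\I\cup\J)\cup\mathrm{S}$ is slightly cleaner than the paper's index-based counting with a figure, but the argument is the same.
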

\begin{figure}[ht!]
\begin{center}
\scalebox{.4}{\input{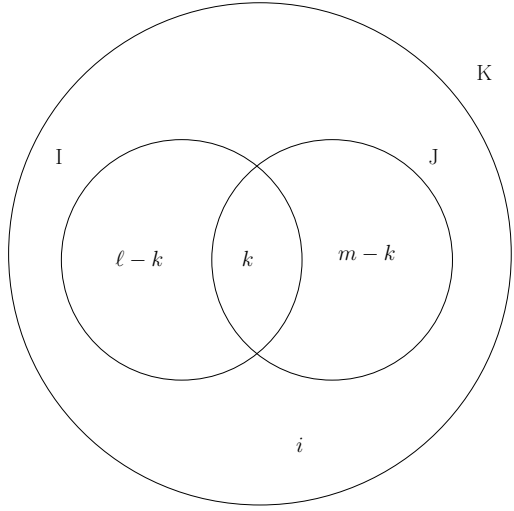}}
\caption{Configuration of sets for the Leibniz rule}\label{fig:leibniz}
\end{center}
\end{figure}
\begin{proof}
Start with $\I$ in layer $\ell$ and $\J$ in layer $m$ with $|\I\cap\J|=k$. Then corresponding to a typical term
$$t^\alpha((T^*)^\alpha/\alpha!)\,a^\beta (T^\beta/\beta!)$$
is a set $\K$ such that $\I\subset\K\supset\J$, i.e., $\K\supset  \I\cup\J$. Let $|\K|=l+m-k+i$.
From the figure, we see that
$$\alpha=|\K\,\setminus\,\I|=m-k+i\,,\qquad \beta=|\K\,\setminus\,\J|=\ell-k+i$$
There are $\displaystyle \binom{n+k-\ell-m}{i}$ choices of such a $\K$, providing a contribution of
$$\sum_i\binom{n+k-\ell-m}{i}\,t^{m-k+i}\,a^{\ell-k+i}=a^{\ell-k}\,(1+at)^{n-\ell-m+k}\,t^{m-k}$$
as required. For the right-hand side, we replace $U=n-2\L$ and use Theorem \ref{thm:grpels}. In the formula for the group elements,
we have on the left-hand side, using primes to denote the coordinates,
$$s'\leftarrow a/(1+at)\,,\qquad u'\leftarrow 1/(1+at)^2\,,\qquad t'\leftarrow t/(1+at)$$
Thus,
\begin{align*}
(1+at)^n\,s'^{|\I\,\setminus\, \J|}\,((1&+at)^{-2}+s't')^{|I\cap J|}\,t'^{|\J\,\setminus\, \I|}\\
&=(1+at)^{n-2|I\cap \J|}\,s'^{|\I\,\setminus\, \J|}\,(1+s't'(1+at)^2)^{|I\cap \J|}\,t'^{|\J\,\setminus\, \I|}\\
&=(1+at)^{n-2|I\cap \J|-|\I\,\setminus\, \J|-|\J\,\setminus\, \I|}\,a^{|\I\,\setminus\,\J|}\,(1+at)^{|\I\,\cap\,\J|}\,t^{|\J\,\setminus\,\I|}\\
&=(1+at)^{n-|I\cap \J|-|\I\,\setminus\, \J|-|\J\,\setminus\, \I|}\,a^{|\I\,\setminus\,\J|}\,t^{|\J\,\setminus\,\I|}
\end{align*}
which agrees with the left-hand side.
\end{proof}

\end{subsection}

\begin{subsection}{Group law}\label{sec:grouplaw}
Now we can find the group law. First,
\begin{proposition} \label{prop:scaling}
For any polynomial or exponential function $f$, we have, for nonzero $a$,
$$ a^{\L} f(T)=f(aT)a^{\L}\,,\qquad f(T^*)a^{\L}=a^{\L} f(aT^*) $$
\end{proposition}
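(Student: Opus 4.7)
The plan is to reduce the statement to a single commutation identity between $a^{\L}$ and each of $T$, $T^*$, then extend it to powers by iteration, and finally to polynomials and exponentials by linearity. The key fact is that the layer operator $\L$ measures the level of a vector, while $T$ raises the level by one and $T^*$ lowers it by one.

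First, I would verify the base identities $a^{\L} T = a\,T\,a^{\L}$ and $T^* a^{\L} = a\,a^{\L} T^*$ directly on a basis vector $e_\I \in \V_\ell$. On the one hand $a^{\L} T e_\I$ lives in $\V_{\ell+1}$, so the left-hand side multiplies by $a^{\ell+1}$, whereas $T a^{\L} e_\I = a^\ell T e_\I$; this yields $a^{\L} T = a T a^{\L}$. An analogous counting on $T^* e_\I \in \V_{\ell-1}$ gives $T^* a^{\L} = a a^{\L} T^*$. These two identities are exactly the exponentiated form of the commutation relations $[\L,T] = T$ and $[T^*,\L] = T^*$ already established in the Boolean-lattice section.

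Next, I would iterate: an easy induction on $k$ gives
\begin{equation*}
a^{\L} T^k = a^k T^k a^{\L} = (aT)^k a^{\L}, \qquad (T^*)^k a^{\L} = a^k a^{\L} (T^*)^k = a^{\L} (aT^*)^k.
\end{equation*}
For a polynomial $f(x)=\sum_k c_k x^k$, linearity gives the two claimed identities immediately. For exponentials, the series terminates because $T^{n+1}=0$ and $(T^*)^{n+1}=0$ on $\V$, so there is no convergence issue and term-by-term manipulation is justified.

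The ``hard'' step is really just bookkeeping: one has to keep straight whether the factor of $a$ ends up on the $T$ side or stays with $a^{\L}$, and the two formulas in the proposition differ precisely in this placement. The cleanest way to present the proof is to state the single-power identities, induct on the exponent, and then invoke linearity; no more elaborate machinery is needed.
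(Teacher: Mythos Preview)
Your proof is correct and follows essentially the same route as the paper: establish the base identity $a^{\L}T=aT\,a^{\L}$, iterate to powers, and extend by linearity. The only cosmetic difference is that the paper derives the base case algebraically from $\L T=T(\L+1)$ rather than checking it on basis vectors, but the two verifications are equivalent.
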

\begin{proof} From the definitions of $\L$ as the number operator and $T$ as raising operator we have the implications
$$ \L T=T(\L+1)\Longrightarrow \L^n T=T(\L+1)^n \Longrightarrow a^{\L} T=Ta^{\L+1}\Longrightarrow a^{\L} T^n=(aT)^na^{\L}$$
which yields the first relation. The second follows similarly.
\end{proof}

\begin{proposition} \label{prop:grouplaw} The group law is
$$g(a,c,b)g(s,u,t)=(1+sb)^n\,g\left( a+\frac{sc}{1+sb},\frac{uc}{(1+sb)^2},t+\frac{ub}{1+sb}\right)$$
\end{proposition}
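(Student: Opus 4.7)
The plan is to compute the product $g(a,c,b)g(s,u,t)=e^{aT}c^{\L}e^{bT^*}e^{sT}u^{\L}e^{tT^*}$ by collapsing the middle block $e^{bT^*}e^{sT}$ via the Leibniz rule, then using the scaling Proposition \ref{prop:scaling} to sort the remaining factors into the canonical $e^{(\cdot)T}(\cdot)^{\L}e^{(\cdot)T^*}$ pattern.

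First I will apply the Leibniz rule (with roles $t\to b$, $a\to s$) to rewrite
$$e^{bT^*}e^{sT}=\exp\!\left(\tfrac{sT}{1+sb}\right)(1+sb)^U\exp\!\left(\tfrac{bT^*}{1+sb}\right).$$
The crucial algebraic observation is that $U=nI-2\L$, so $(1+sb)^U=(1+sb)^n\,(1+sb)^{-2\L}$; this peels off the overall prefactor $(1+sb)^n$ and leaves a pure $\L$-power that can be merged with the adjacent $c^{\L}$ and $u^{\L}$ factors.

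Next I will slide the $c^{\L}$ rightward past $\exp(sT/(1+sb))$ using the first half of Proposition \ref{prop:scaling}, which produces $\exp(scT/(1+sb))\,c^{\L}$. Combining with $e^{aT}$ on the left via $e^{aT}\exp(scT/(1+sb))=\exp\!\bigl((a+sc/(1+sb))T\bigr)$ gives the correct first entry of the final group element. Then I merge $c^{\L}(1+sb)^{-2\L}=(c/(1+sb)^2)^{\L}$. Symmetrically, I slide $u^{\L}$ leftward past $\exp(bT^*/(1+sb))$ using the second half of Proposition \ref{prop:scaling}, yielding $u^{\L}\exp(ubT^*/(1+sb))$, and combine with $e^{tT^*}$ on the right to get $\exp\!\bigl((t+ub/(1+sb))T^*\bigr)$. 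Finally, $(c/(1+sb)^2)^{\L}u^{\L}=(uc/(1+sb)^2)^{\L}$ assembles the middle entry.

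Putting the three pieces together yields exactly
$$(1+sb)^n\,g\!\left(a+\tfrac{sc}{1+sb},\,\tfrac{uc}{(1+sb)^2},\,t+\tfrac{ub}{1+sb}\right),$$
as claimed. The only real obstacle is the identity $U=nI-2\L$ that turns the mysterious $U$-power coming out of the Leibniz rule into a manageable $\L$-power plus a scalar; once that is in hand, the rest is a bookkeeping exercise in which the scaling proposition automatically produces the Möbius-like rational substitutions in the three arguments.
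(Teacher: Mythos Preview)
Your proof is correct and follows essentially the same route as the paper: apply the Leibniz rule to the inner block $e^{bT^*}e^{sT}$, convert the resulting $(1+sb)^U$ into $(1+sb)^n\,(1+sb)^{-2\L}$ via $U=nI-2\L$, and then use the scaling Proposition~\ref{prop:scaling} to push the $\L$-powers past the $T$- and $T^*$-exponentials. The paper's own proof is terser but identical in strategy; you have simply spelled out the bookkeeping in more detail.
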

\begin{proof}
The essential part is
$$c^{\L} \,e^{bT^*}e^{sT}\,u^{\L}=(1+sb)^n\,c^{\L}\exp\left(\frac{sT}{1+sb}\right)\,\bigl((1+sb)^{-2}\bigr)^{\L}\,\exp\left(\frac{bT^*}{1+sb}\right)u^{\L}$$
via the Leibniz rule. Now use Proposition \ref{prop:scaling} to get the middle factor. Padding with the first and last factors, one arrives at the required formula.
\end{proof}
\end{subsection}

\begin{subsection}{Group elements via exponentiation}
A typical element of the Lie algebra has the form $sT+vU+tT^*$. Since $U$ acts diagonally on the $Z$-basis, we restrict to the form
$sT+tT^*$. We have another reduction by the relation
$$sT+tT^*=\sqrt{st}\,\left({\textstyle\sqrt{\frac{s\mathstrut}{t}}}\,T+{\textstyle\sqrt{\frac{t}{s}}}\,T^*\right)$$
and observing that this is of the form $a(T'+{T'}^*)$, where $T'=\sqrt{s/t}\,T$ and ${T'}^*=\sqrt{t/s}\,T^*$ with $[{T'}^*,T']=U$  
form a standard triple. In other words, up to scaling, we need only consider
exponentiation of $t(T+T^*)$. First we state the result, then discuss proofs.

\begin{theorem} \label{thm:exp} We have
$$\exp\bigl(t\,(T+T^*)\bigr)=e^{(\tanh t)T}\,(\cosh t)^U\,e^{(\tanh t)T^*}$$
\end{theorem}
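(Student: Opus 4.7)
The plan is to exploit the tensor-product decomposition of the Boolean representation emphasized in \S4. Since $T=\sum_i \hat e_i$, $T^*=\sum_i \hat\delta_i$, and operators carrying distinct site indices commute, the summands $\hat e_i+\hat\delta_i$ commute pairwise, so
\[
\exp\bigl(t(T+T^*)\bigr)=\prod_{i=1}^n \exp\bigl(t(\hat e_i+\hat\delta_i)\bigr).
\]
I would therefore first reduce the theorem to the single-site statement
\[
\exp\bigl(t(\hat e_i+\hat\delta_i)\bigr)=\exp\bigl((\tanh t)\hat e_i\bigr)\,(\cosh t)^{\hat h_i}\,\exp\bigl((\tanh t)\hat\delta_i\bigr),
\]
since an identical factorization of the right-hand side will be assembled at the end.

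For fixed $i$, the matrix realization of \S\ref{subsec:zeon} identifies $(\hat e_i,\hat\delta_i,\hat h_i)$ on the two-dimensional tensor slot at site $i$ with the standard triple $(R,L,H)$, with $H=\sigma_z$. Hence $\hat e_i+\hat\delta_i$ becomes $\sigma_x$, and from $\sigma_x^2=I$ I get $\exp(t\sigma_x)=\cosh t\,I+\sinh t\,\sigma_x$. Using $R^2=L^2=0$, the right-hand side becomes the explicit product
\[
\begin{pmatrix}1&0\\ \tanh t&1\end{pmatrix}\begin{pmatrix}\cosh t&0\\0&\operatorname{sech}t\end{pmatrix}\begin{pmatrix}1&\tanh t\\0&1\end{pmatrix},
\]
and the identity $\sinh t\tanh t+\operatorname{sech}t=\cosh t$ makes this equal to $\bigl(\begin{smallmatrix}\cosh t&\sinh t\\\sinh t&\cosh t\end{smallmatrix}\bigr)=\exp(t\sigma_x)$, establishing the single-site version.

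To reassemble, I multiply the single-site identities together, obtaining $\prod_i \exp((\tanh t)\hat e_i)(\cosh t)^{\hat h_i}\exp((\tanh t)\hat\delta_i)$. In this product the only non-commuting pairs live on a common site; operators indexed by different sites commute freely. So I push every $\hat e$-exponential to the left, every $\hat h$-power to the middle, and every $\hat\delta$-exponential to the right, producing
\[
\Bigl(\prod_i e^{(\tanh t)\hat e_i}\Bigr)\Bigl(\prod_i(\cosh t)^{\hat h_i}\Bigr)\Bigl(\prod_i e^{(\tanh t)\hat\delta_i}\Bigr)=e^{(\tanh t)T}\,(\cosh t)^U\,e^{(\tanh t)T^*},
\]
which is the claim. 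The main point requiring care is the reordering: within a site the three factors do not commute, so each swap performed must involve operators at different sites; everything then reduces to the $2\times 2$ verification above.
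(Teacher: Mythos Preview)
Your proof is correct and follows essentially the same approach as the paper: reduce to a single site via the pairwise commutativity of $\hat e_i+\hat\delta_i$, verify the $2\times2$ identity using $(\hat e_i+\hat\delta_i)^2=I$ together with $\operatorname{sech}t+\sinh t\tanh t=\cosh t$, and reassemble. The only cosmetic difference is that the paper carries out the single-site verification via the projection calculus $P=\hat\delta_i\hat e_i$, $Q=\hat e_i\hat\delta_i$ rather than by the explicit matrix product you write, but the content is the same.
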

This result is known in a variety of contexts, from the study of symmetric spaces to systems of ordinary differential equations. Here we wish to provide
a proof in the context of the zeon algebra, using properties from Section \S\ref{subsec:zeon}. \bigskip

Writing $T+T^*=\sum_i (\hat e_i+\hat \delta_i)$, we see that it is sufficient to consider the single pair $\hat e_1$ and $\hat \delta_1$.
Consider $P=\hat \delta_1\hat e_1$ and $Q=\hat e_1\hat \delta_1$. The lemma only depends on their properties as complementary
projections.

\begin{lemma} Let $P$ and $Q$ be idempotents satisfying $P+Q=I$, $PQ=QP=0$. Then \medskip

1. $e^{tP}=Q+e^t\,P$ . \par
2. For $a>0$, 
$$ a^{P-Q}=aP+a^{-1}Q$$
\end{lemma}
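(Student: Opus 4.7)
Both parts follow from a single observation: because $P$ and $Q$ are complementary orthogonal idempotents, any polynomial in $P$ (equivalently in $Q$, since $Q=I-P$) collapses to a linear combination of $P$ and $Q$. Specifically, $P^k=P$ and $Q^k=Q$ for $k\ge 1$, and $PQ=QP=0$, so the whole functional calculus for these two operators reduces to assigning an eigenvalue on the range of $P$ and an eigenvalue on the range of $Q$.

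For part 1, I would just expand the exponential series. Since $P^n=P$ for all $n\ge 1$,
\[
e^{tP}=I+\sum_{n\ge 1}\frac{t^n}{n!}P^n=I+(e^t-1)P=(I-P)+e^tP=Q+e^tP,
\]
using $Q=I-P$ in the last step. This is a one-line calculation and not the obstacle.

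For part 2, I would reduce $a^{P-Q}$ to the exponential form $e^{(\log a)(P-Q)}$ and then either expand directly or use part 1 twice. The direct route is to note that $P-Q=2P-I$, so, since $P$ commutes with itself,
\[
a^{P-Q}=a^{-1}\,a^{2P}=a^{-1}\,e^{(2\log a)P}=a^{-1}\bigl(Q+a^2 P\bigr)=aP+a^{-1}Q,
\]
by part 1 applied with $t=2\log a$. Alternatively, because $P$ and $Q$ commute, $a^{P-Q}=a^P\,a^{-Q}=(Q+aP)(P+a^{-1}Q)$, and multiplying out with $PQ=QP=0$, $P^2=P$, $Q^2=Q$ gives the same answer. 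The positivity $a>0$ is needed only to make sense of $a^{P-Q}$ via the real logarithm.

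The main \emph{obstacle} is essentially nonexistent here; the only thing to be careful about is to make sure the reduction $P^n=P$ is used correctly in the series and that the cross terms $PQ$ and $QP$ are discarded at the right moment. No genuine analytic difficulty arises because the operators in question span a two-dimensional commutative subalgebra isomorphic to $\mathbb{C}\oplus\mathbb{C}$, on which the functional calculus is transparent.
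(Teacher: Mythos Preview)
Your proof is correct and matches the paper's approach essentially line for line: the paper proves part~1 by the same series collapse $e^{tP}=I+P(e^t-1)=Q+e^tP$, and for part~2 it takes your second alternative, writing $a=e^t$ so that $a^{P-Q}=e^{tP}e^{-tQ}=(Q+aP)(P+a^{-1}Q)=aP+a^{-1}Q$. Your first route for part~2 via $P-Q=2P-I$ is a harmless variant that the paper does not mention.
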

\begin{proof} First,
$$ e^{tP}=I+P(e^t-1)=I-P+e^t\,P=Q+e^t\,P\ .$$
Next, write $a=e^t$. Then using \#1 for $P$, then $Q$, we have
\begin{align*}
a^{P-Q}&=e^{tP}e^{-tQ}\\
&=(Q+aP)(P+a^{-1}Q)\\
&=aP+a^{-1}Q
\end{align*}
as required.
\end{proof}
Of course, $e^{t\hat e_1}=I+t\hat e_1$, and similarly for $\hat \delta_1$.  And observe that $\hat e_1+\hat \delta_1$ squares to the identity
$$(\hat e_1+\hat \delta_1)^2=\hat e_1\hat \delta_1+\hat \delta_1\hat e_1=I$$
Then 
$$\exp\bigl(t(\hat e_1+\hat \delta_1)\bigr)=I\,\cosh t+(\hat e_1+\hat \delta_1)\sinh t$$
follows immediately. This is the left-hand side of the formula for $n=1$. Now we verify the right-hand side, using $P$ and $Q$ as above, we recall
equation \eqref{eq:triples}, so that $\hat e_1 P=\hat e_1$ and $P\hat \delta_1=\hat \delta_1$, the reversed products vanishing. So
\begin{align*}
&(I+\hat e_1\tanh t)\,(\cosh t)^{P-Q}\,(I+\hat \delta_1\tanh t)\\
&=(I+\hat e_1\tanh t)\,(P\cosh t+Q\,\mathrm{sech}\, t)\,(I+\hat \delta_1\tanh t)\\
&=(I+\hat e_1\tanh t)(P\cosh t+Q\,\mathrm{sech}\, t+\hat \delta_1\sinh t)\\
&=P\cosh t+Q\,\mathrm{sech}\, t+\hat \delta_1\sinh t+\hat e_1\sinh t +Q\sinh t\tanh t\\
\end{align*}
Using the elementary identity $\mathrm{sech}\, t+\sinh t\tanh t=\cosh t$, this last reduces accordingly.
\end{subsection}

\section{Krawtchouk polynomials and the Hamming scheme. The Johnson scheme}
\subsection{Krawtchouk polynomials and the Hamming scheme}
Combining Theorems \ref{thm:grpels} and \ref{thm:exp}, we have the matrix elements
\begin{align}
(e^{t\,(T+T^*)})_{\I\J}&=\bigl((\cosh t)^n\,e^{(\tanh t)T}\,(\mathrm{sech}^2t)^{\L}\,e^{(\tanh t)T^*}\bigr)_{\I\J} \nonumber   \\
&=(\cosh t)^n\,(\tanh t)^{|\I\DD\J|}  \label{eq:krav}
\end{align}
starting with the relation $U=nI-2\L$ and noting that with the substitutions $s=t=\tanh t$, $u=\mathrm{sech}^2t$, 
we have $u+st=\mathrm{sech}^2t+\tanh^2t=1$. Now let $v=\tanh t$, we have $\mathrm{sech}\, t=\sqrt{1-v^2}$. \bigskip

\begin{notation}   For the remainder of this section we will write $X$ for $T+T^*$.
\end{notation}

And eq. \eqref{eq:krav} becomes
$$e^{tX}=(1-v^2)^{-n/2}\,v^{|\I\DD\J|}$$
Furthermore, we can solve for $e^t$:
$$e^t=\sqrt{\frac{1+v}{1-v}}$$
and rearrange to get
\begin{equation}\label{eq:krav2}
\bigl((1+v)^{(n+X)/2}\,(1-v)^{(n-X)/2}\bigr)_{\I\J}=v^{|\I\DD\J|}
\end{equation}
On the left-hand side we recognize the generating function for the \textit{Krawtchouk polynomials}:
$$(1+v)^{(n+X)/2}\,(1-v)^{(n-X)/2}=\sum_j v^j K_j(X,n)/j!$$
in a convenient scaling, and expanding the binomials yields the explicit form
$$K_j(X,n)/j!=\sum_i \binom{(n+X)/2}{j-i}\binom{(n-X)/2}{i}\,(-1)^i$$
Thus,
\begin{theorem} We have the global relation
$$K_j(T+T^*,n)/j!=\HS{j}{n}$$
\end{theorem}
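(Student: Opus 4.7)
The plan is to simply match coefficients of powers of $v$ in the matrix-element identity already derived. By equation \eqref{eq:krav2}, for every pair $\I,\J\in\B(n)$,
$$\bigl((1+v)^{(n+X)/2}(1-v)^{(n-X)/2}\bigr)_{\I\J}=v^{|\I\DD\J|},$$
where $X=T+T^*$.

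First I would observe that the right-hand side can be rewritten, using the definition \eqref{eq:hamming} of the Hamming relation matrices, as
$$v^{|\I\DD\J|}=\sum_{j=0}^{n} v^j\,(\HS{j}{n})_{\I\J},$$
since for each fixed pair $(\I,\J)$ exactly one value of $j$, namely $j=|\I\DD\J|$, contributes. Next, using the stated generating function for the Krawtchouk polynomials,
$$(1+v)^{(n+X)/2}(1-v)^{(n-X)/2}=\sum_{j\ge 0} v^j\,K_j(X,n)/j!,$$
I would read off the $(\I,\J)$-matrix element of both sides as formal power series in $v$, obtaining
$$\sum_{j\ge 0} v^j\,(K_j(T+T^*,n)/j!)_{\I\J}=\sum_{j\ge 0} v^j\,(\HS{j}{n})_{\I\J}.$$
Equating coefficients of $v^j$ (which is legitimate because both sides are polynomial in $v$ of degree at most $n$) gives the stated identity entry-by-entry, hence as matrices.

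The only point that needs a brief justification is that the operator series defining $(1+v)^{(n+X)/2}(1-v)^{(n-X)/2}$ really is the matrix version of the generating function, i.e. that the Krawtchouk polynomials in the operator $X$ are obtained by the same polynomial expansion used in the scalar generating function. This is immediate from the explicit expansion
$$K_j(X,n)/j!=\sum_i\binom{(n+X)/2}{j-i}\binom{(n-X)/2}{i}(-1)^i,$$
together with the fact that $X=T+T^*$ acts on the finite-dimensional space $\V$, so the functional-calculus manipulations are purely polynomial. There is no real obstacle; the theorem is essentially a re-reading of \eqref{eq:krav2} through the Krawtchouk generating function, with the Hamming relation matrices providing the natural $v^j$-coefficient on the combinatorial side.
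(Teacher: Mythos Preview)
Your proof is correct and follows essentially the same route as the paper: both expand the left-hand side of \eqref{eq:krav2} via the Krawtchouk generating function, rewrite the right-hand side $v^{|\I\DD\J|}$ as $\sum_j v^j(\HS{j}{n})_{\I\J}$, and then match coefficients of $v^j$. Your added remark on why the operator functional calculus is purely polynomial is a helpful clarification but not a departure in method.
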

\begin{proof} 
Write eq. \eqref{eq:krav2} as
$$\sum_j v^j \left(K_j(X,n)\right)_{\I\J}/j!=v^{|\I\DD\J|}$$
Recalling the Hamming matrix from equation \eqref{eq:hamming}, we have
$$\sum v^j (\HS{j}{n})_{\I\J}=v^{|\I\DD\J|}$$
which is exactly the relation above. \hfill\qedhere
\end{proof}

Now we work within a single subrepresentation, $\alpha$-chain, with principal number $N$. Applying $e^{tX}$ to a vacuum state
$\phi_0(N)$, we have
$$e^{tX}\phi_0(N)=e^{(\tanh t)T}\,(\cosh t)^U\,e^{(\tanh t)T^*}\phi_0(N)=(\cosh t)^N\,e^{(\tanh t)T}\phi_0(N)$$
via $U\phi_0(N)=N\phi_0(N)$. Proceeding as above, substituting $v=\tanh t$, comparing eqs. \eqref{eq:krav} and \eqref{eq:krav2} with the
equation above yields the local formula
\begin{align*}
(1+v)^{(N+X)/2}\,(1-v)^{(N-X)/2}\phi_0(N)&=\bigl(\sum_j v^j K_j(X,N)/j!\bigr)\,\phi_0(N)\\
&=e^{vT}\phi_0(N)=\sum_j v^j \phi_j(N)/j!
\end{align*}
and hence the identification
\begin{theorem} For each $\alpha$-chain, with $N=n-2\alpha$, we have the states
$$\phi_j(N)=T^j\phi_0(N)=K_j(T+T^*,N)\,\phi_0(N)$$
\end{theorem}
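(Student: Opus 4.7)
The plan is to read off the identification by comparing the two series expansions that the preceding paragraph has already lined up, one coming from the $e^{tX}$ factorization and one coming from the Krawtchouk generating function.

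First I would apply the factorization $\exp(tX)=e^{(\tanh t)T}(\cosh t)^U e^{(\tanh t)T^*}$ from Theorem \ref{thm:exp} to a vacuum vector $\phi_0(N)$ of an $\alpha$-chain. Using $T^*\phi_0(N)=0$ and $U\phi_0(N)=N\phi_0(N)$, the two outer factors collapse to scalars, giving
$$e^{tX}\phi_0(N)=(\cosh t)^N\,e^{(\tanh t)T}\phi_0(N).$$
Substituting $v=\tanh t$ and using $(\cosh t)^N=(1-v^2)^{-N/2}=(1+v)^{-N/2}(1-v)^{-N/2}$, together with $e^t=\sqrt{(1+v)/(1-v)}$ applied inside the functional calculus for the diagonalizable operator $X$ (which acts on the finite-dimensional chain), the identity rearranges to
$$(1+v)^{(N+X)/2}(1-v)^{(N-X)/2}\,\phi_0(N)=e^{vT}\phi_0(N)=\sum_{j\ge 0}\frac{v^j}{j!}\,T^j\phi_0(N).$$

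Next I would expand the left-hand side by the defining generating function of the Krawtchouk polynomials recalled just above the theorem:
$$(1+v)^{(N+X)/2}(1-v)^{(N-X)/2}=\sum_{j\ge 0}\frac{v^j}{j!}\,K_j(X,N),$$
so that applying this operator identity to $\phi_0(N)$ yields
$$\sum_{j\ge 0}\frac{v^j}{j!}\,K_j(T+T^*,N)\,\phi_0(N)=\sum_{j\ge 0}\frac{v^j}{j!}\,T^j\phi_0(N).$$
Matching coefficients of $v^j$ gives $K_j(T+T^*,N)\,\phi_0(N)=T^j\phi_0(N)=\phi_j(N)$, which is the stated identity.

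The only genuinely delicate point, and the step I would expect to need most care, is the legitimacy of going from the scalar identity relating $t$ and $v$ to the operator identity $e^{tX}=(1+v)^{(N+X)/2}(1-v)^{(N-X)/2}$ applied to $\phi_0(N)$: one must observe that on the finite-dimensional $\alpha$-chain generated by $\phi_0(N)$ the operator $X=T+T^*$ has a finite spectrum (so functional calculus is unambiguous), and that the identity is really just the formal identity of holomorphic functions of $X$ evaluated on a finite-dimensional invariant subspace. Once that justification is in place, the rest is bookkeeping with generating functions, already essentially performed in the text preceding the theorem.
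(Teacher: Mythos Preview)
Your proposal is correct and follows essentially the same route as the paper: apply the factorization of $e^{tX}$ to the vacuum $\phi_0(N)$, collapse the $T^*$ and $U$ factors, substitute $v=\tanh t$, and then match coefficients of $v^j$ between the Krawtchouk generating function and $e^{vT}\phi_0(N)$. Your added remark about justifying the functional calculus on the finite-dimensional $\alpha$-chain is a point the paper leaves implicit.
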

In this connection, we see from another point of view the recurrence relation for the Krawtchouk polynomials as precisely the action
of $T+T^*$ on the $Z$-basis:
$$X\phi_j=(T+T^*)\phi_j=\phi_{j+1}+j(N+1-j)\,\phi_{j-1}$$
according to the su$(2)$ action, equation \eqref{eq:su2action}.

\subsection{Johnson scheme}
Here, we start with the Hermitian matrix, for $u$ real,
$$g(z,u,\bar z)=e^{zT}u^{\L} e^{\bar z T^*}$$
We use a variation of the \textit{Bargmann transform}, which is a complex version of a two-dimensional Gaussian integral. We write
$$dz\,d\bar z=(dx+i\,dy)(dx-i\,dy)=-2i\, dx\,dy$$
and recall the Gaussian integral
$$\iint\limits_{\mathbb{R}^2} e^{-(x^2+y^2)/2} \,dx\,dy=2\pi$$
For us, the Bargmann transform takes the form
$$\iint e^{-z\bar z}f(z,\bar z)\,{\textstyle\frac{i\,dz\,d\bar z}{2\pi}}=\int_0^\infty\!\!\int_0^{2\pi}e^{-r^2}f(re^{i\theta},re^{-i\theta})\,2r\,dr\,d\theta/(2\pi)$$
changing to polar coordinates.  \bigskip

First, we apply this to the matrix elements of $g(z,u,\bar z)$. We have, from Theorem \ref{thm:grpels}, with $r^2=z\bar z$,
\begin{align*}
\iint e^{-z\bar z}g(z,u,\bar z)\,{\textstyle\frac{i\,dz\,d\bar z}{2\pi}}
&=\iint e^{-z\bar z}\,z^{|\I\,\setminus\, \J|}\,(u+r^2)^{|\I\cap \J|}\,{\bar z}^{|\J\,\setminus\, \I|}\,{\textstyle\frac{i\,dz\,d\bar z}{2\pi}}\\
&=\int_0^\infty\!\!\int_0^{2\pi}e^{-r^2} r^{|\I\,\DD\, \J|}\,e^{i\theta(|\I\,\setminus\, \J|-|\J\,\setminus\, \I|)}  \,(u+r^2)^{|\I\cap \J|}\,2r\,dr\,d\theta/(2\pi)\\
\end{align*}
Integrating over $\theta$ forces $|\I\,\setminus\, \J|=|\J\,\setminus\, \I|$, or $|\I|=|\J|$.
So take $\I,\J\in\B_\ell$ with $\text{dist}_{J}(\I,\J)=k$. We have
\begin{align*}
\int_0^\infty e^{-r^2} r^{2k}(u+r^2)^{\ell-k}\,2r\,dr&=\int_0^\infty e^{-x} x^k (u+x)^{\ell-k}\,dx
&\text{ (substituting } x=r^2\text{)}\\
&=\sum_j \binom{\ell-k}{j}u^{\ell-k-j}(j+k)! &\text{ (substituting } j+k=m\text{)}\\
&=\sum_m \binom{\ell-k}{m-k}u^{\ell-m}\,m!
\end{align*}
Thus, on $\V_\ell$, we have (going back to $j$ as summation index)
$$\iint e^{-z\bar z}e^{zT}u^{\L} e^{\bar z T^*}\,{\textstyle\frac{i\,dz\,d\bar z}{2\pi}}=\sum_k\sum_j \binom{\ell-k}{j-k}u^{\ell-j}\,j!\,\JS{k}{n\ell}$$

Next, we expand $g(z,u,\bar z)$ in series and integrate, recalling that the sums are finite, integrating over $\theta$ and substituting $x=r^2$ as above,
\begin{align*}
\iint e^{-z\bar z}\sum_{j,k} \frac{z^jT^j}{j!}u^{\L} \frac{{\bar z}^k{T^*}^k}{k!}\,{\textstyle\frac{i\,dz\,d\bar z}{2\pi}}
&=\sum_{j,k}\int_0^\infty\!\!\int_0^{2\pi}e^{-r^2}
r^{j+k} e^{i\theta(j-k)} \frac{T^j}{j!}u^{\L} \frac{{T^*}^k}{k!}\,{\textstyle\frac{2r\,dr\,d\theta}{2\pi}}\\
&=\sum_{j}\int_0^\infty e^{-r^2} r^{2j}\frac{T^j}{j!}u^{\L} \frac{{T^*}^j}{j!}\,2r\,dr\\
&=\sum_{j}\int_0^\infty e^{-x} x^j\frac{T^j}{j!}u^{\L} \frac{{T^*}^j}{j!}\,dx\\
&=\sum_{j}\frac{T^j{T^*}^j}{j!}u^{\L-j} \qquad\text{( moving } \L \text{ past } {T^*}^j\text{ )}
\end{align*}  \bigskip
\begin{notation}  Now it is convenient to introduce the notation
$$T_j=\frac{T^j{T^*}^j}{j!\,j!}$$
  \end{notation}

On $\V_\ell$, comparing coefficients of $u^{\ell-j}$, we have the identity
$$T_j\Bigr|_{\V_\ell}=\sum_k \binom{\ell-k}{j-k}\, \JS{k}{n\ell}\ .$$
So by binomial inversion, we have
\begin{equation}\label{eq:JS}
\JS{k}{n\ell}=\sum_j (-1)^{k-j}\binom{\ell-j}{k-j}\,T_j\Bigr|_{\V_\ell}\ .
\end{equation}
The above formulas extend the basic result of Proposition \ref{prop:js}. \bigskip

\begin{remark} 
          See \cite{BCH} for the Bargmann transform on compact groups.
\end{remark}

\begin{subsubsection}{Johnson spectrum}
From equation \eqref{eq:JS} the spectrum of the Johnson matrices follows readily from our
diagonalization of the $T_j$ operators in the $Z$-basis. Given $n$ and $N$, with $N=n-2\alpha$, we have
to rewrite the action of ${T^*}^m/m!$, say, 
\begin{align*}
({T^*}^m/m!)\phi_j&=(j(j-1)\cdots(j-m+1)/m!)\cdot\,(N+1-j)_m\,\phi_{j-m}\\
&=\binom{j}{m}\binom{N-j+m}{m}\,m!\,\phi_{j-m}
\end{align*}
in terms of $\alpha$. On the state $\phi_{\ell-\alpha}$, replacing $N$ by $n-2\alpha$, $j$ by $\ell-\alpha$, and $m$ by $j$ in the above formula, we have
$$T_j\phi_{\ell-\alpha}=\binom{\ell-\alpha}{j}\binom{n-\alpha-\ell+j}{j}\,\phi_{\ell-\alpha}$$
Thus the spectrum of $\JS{k}{n\ell}$ is given by
$$\Lambda_k^{n\ell}(\alpha)=\sum_j \binom{\ell-\alpha}{j}\binom{n-\ell-\alpha+j}{j}\binom{\ell-j}{k-j}\,(-1)^{k-j}$$
the $\alpha^{\rm th}$ eigenvalue of $\JS{k}{n\ell}$, $0\le\alpha\le n/2$.
Cf. \cite[p.\,220]{BI}. See also \cite[\S7.4]{GSL} which has a closely related approach to the Johnson spectrum using local inclusion operators.
\end{subsubsection}

\section{Conclusion}
Starting with the regular representation of the zeon algebra over a vector space of dimension $n$, we find inclusion operators that
generate an sl(2) Lie algebra. The global representation is composed of irreducible constituents transversal to the layering of the Boolean lattice
ranked by cardinality. This representation exponentiates to a group representation that has deep connections with the structure of the Boolean algebra.
We express the matrices for the Hamming and Johnson schemes in terms of the sl(2) generators originally constructed via the zeon algebra. \bigskip

One may conclude that zeon algebras are as natural as the closely related Grassmann and Clifford algebras.
This work serves to establish their importance. Zeons are already appearing in current work, e.g. \cite{SSTZ}. 
Further explorations, extensions, and applications are for the future.

\vfill\eject
\section{Appendix}
\def\changemargin#1#2{\list{}{\rightmargin#2\leftmargin#1}\item[]}
\let\endchangemargin=\endlist 

\includegraphics[scale=0.8]{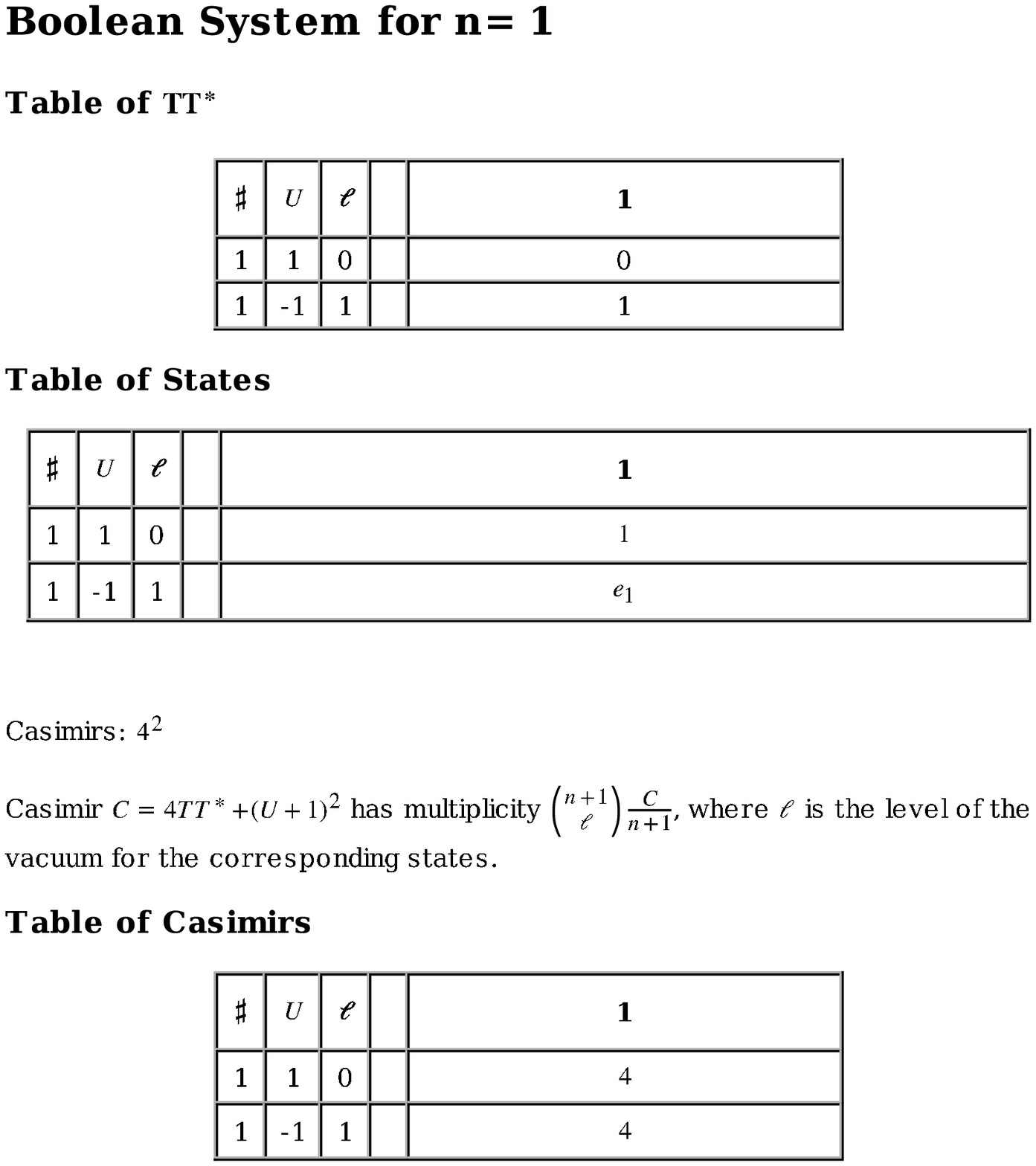}\vfill\eject
\includegraphics[scale=0.8]{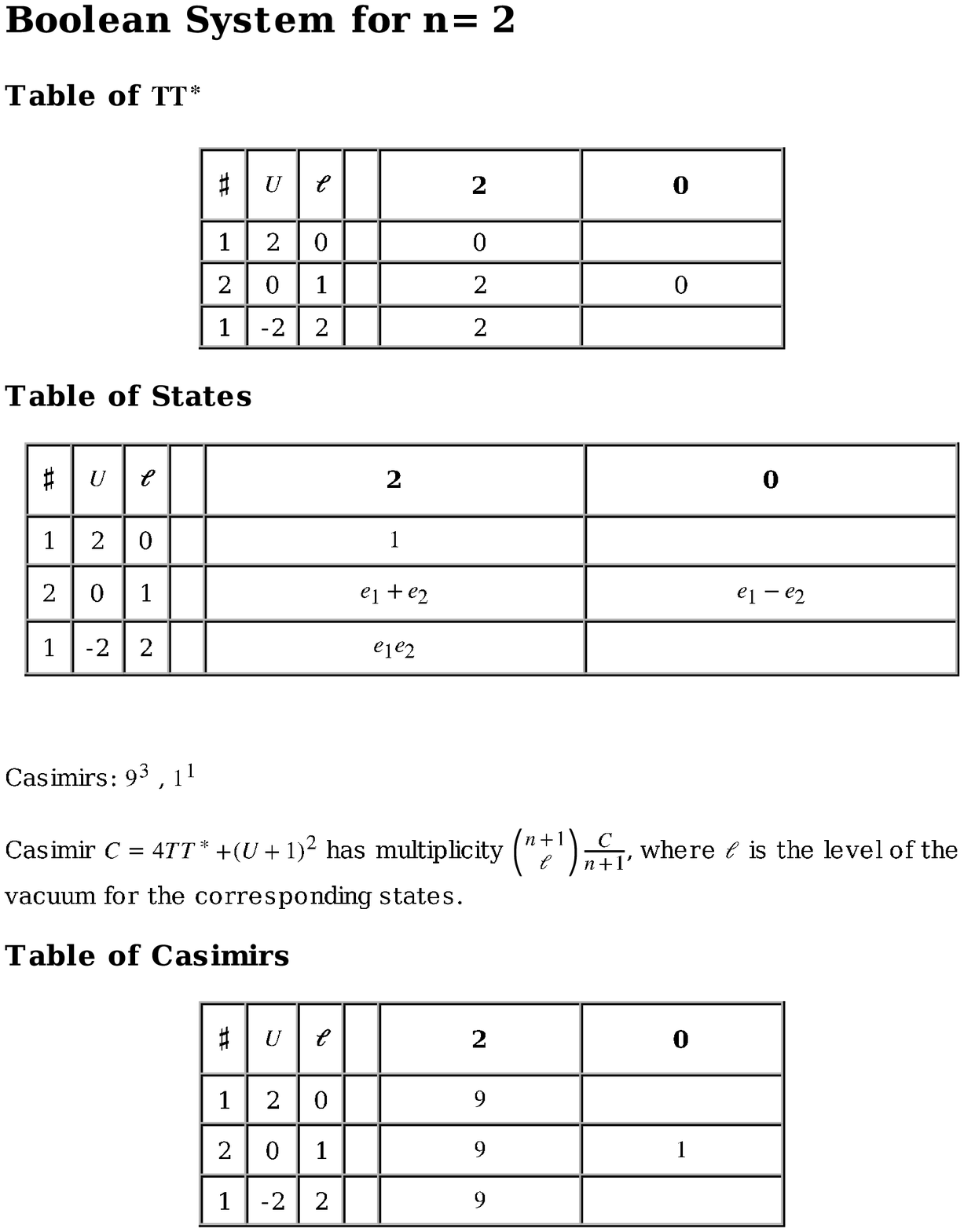}\vfill\eject
\includegraphics[scale=0.8]{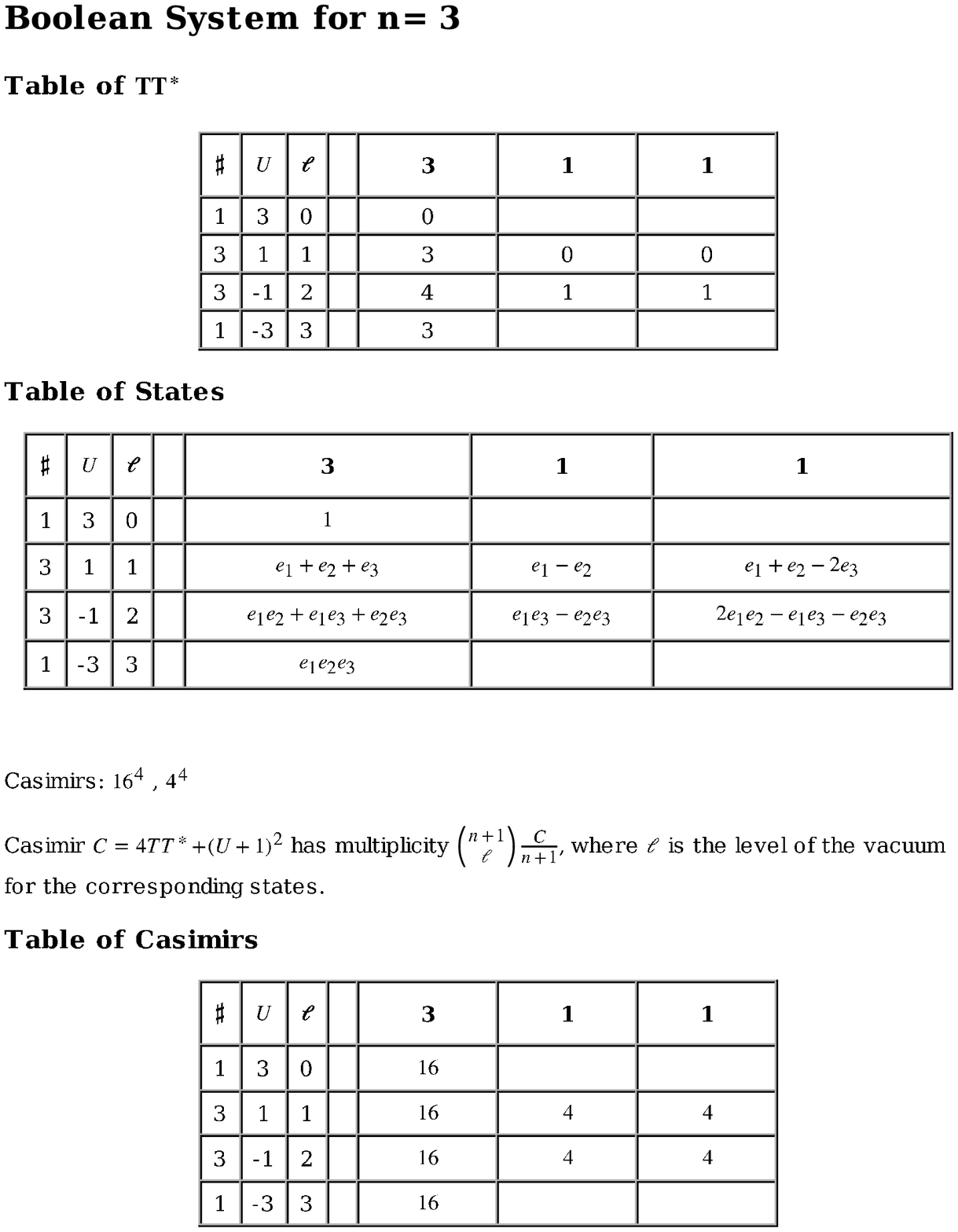}
{\voffset=-1in
\begin{changemargin}{-1.5in}{0in} 
\includegraphics[scale=.9]{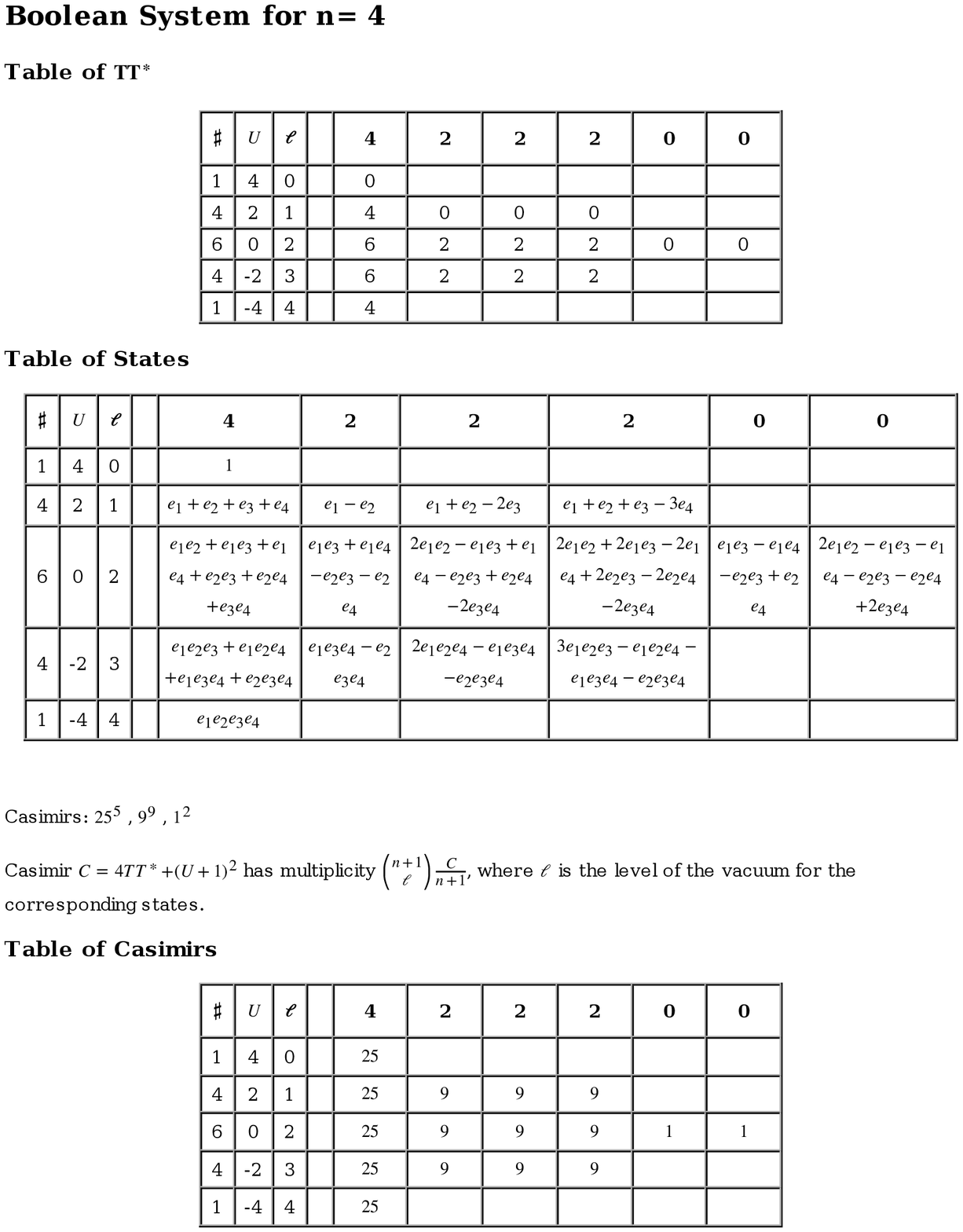}\vfill\eject
\includegraphics{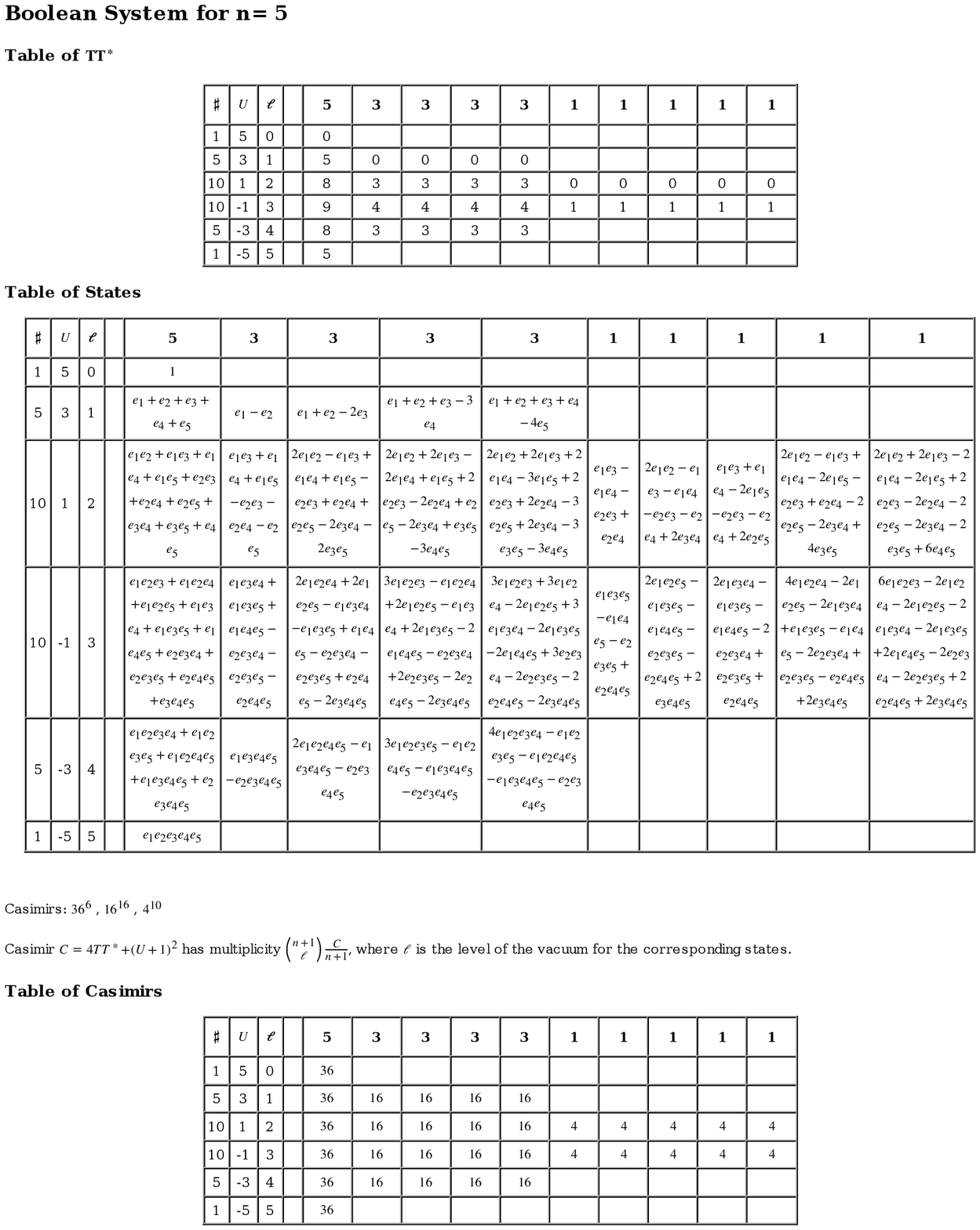}
\vfill\eject
\includegraphics[scale=.85]{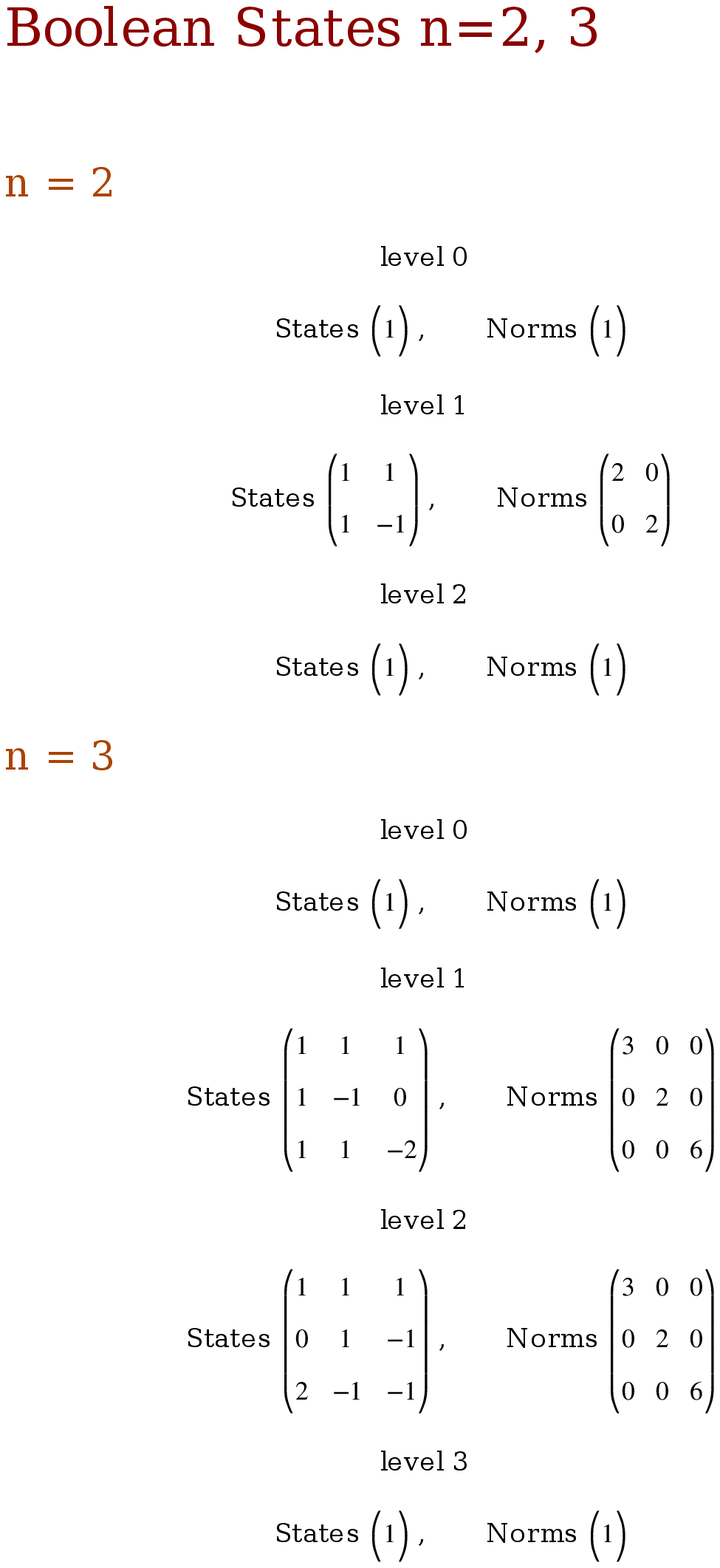}\vfill\eject
\includegraphics[scale=.9]{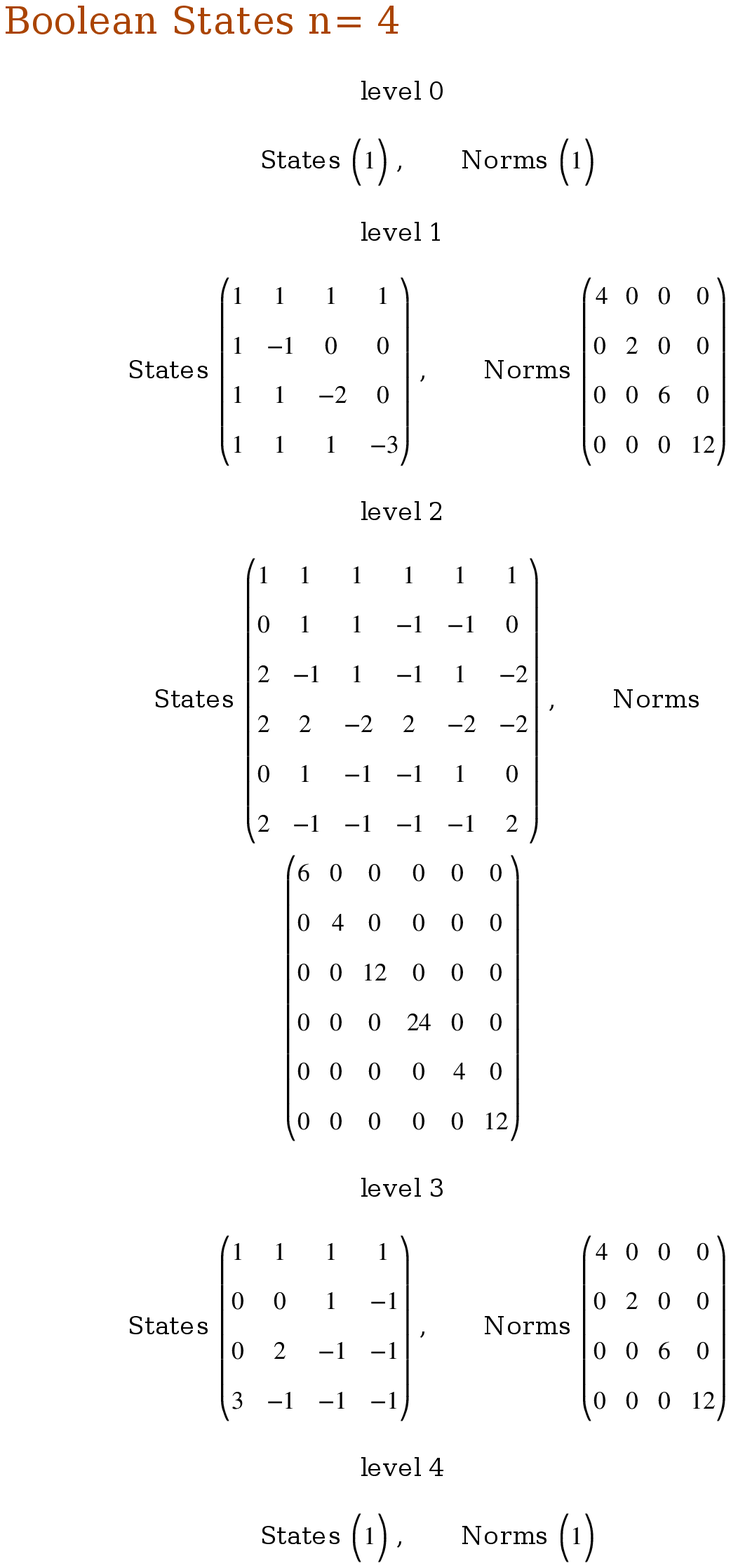}\vfill\eject
\end{changemargin}
\includegraphics[scale=.9]{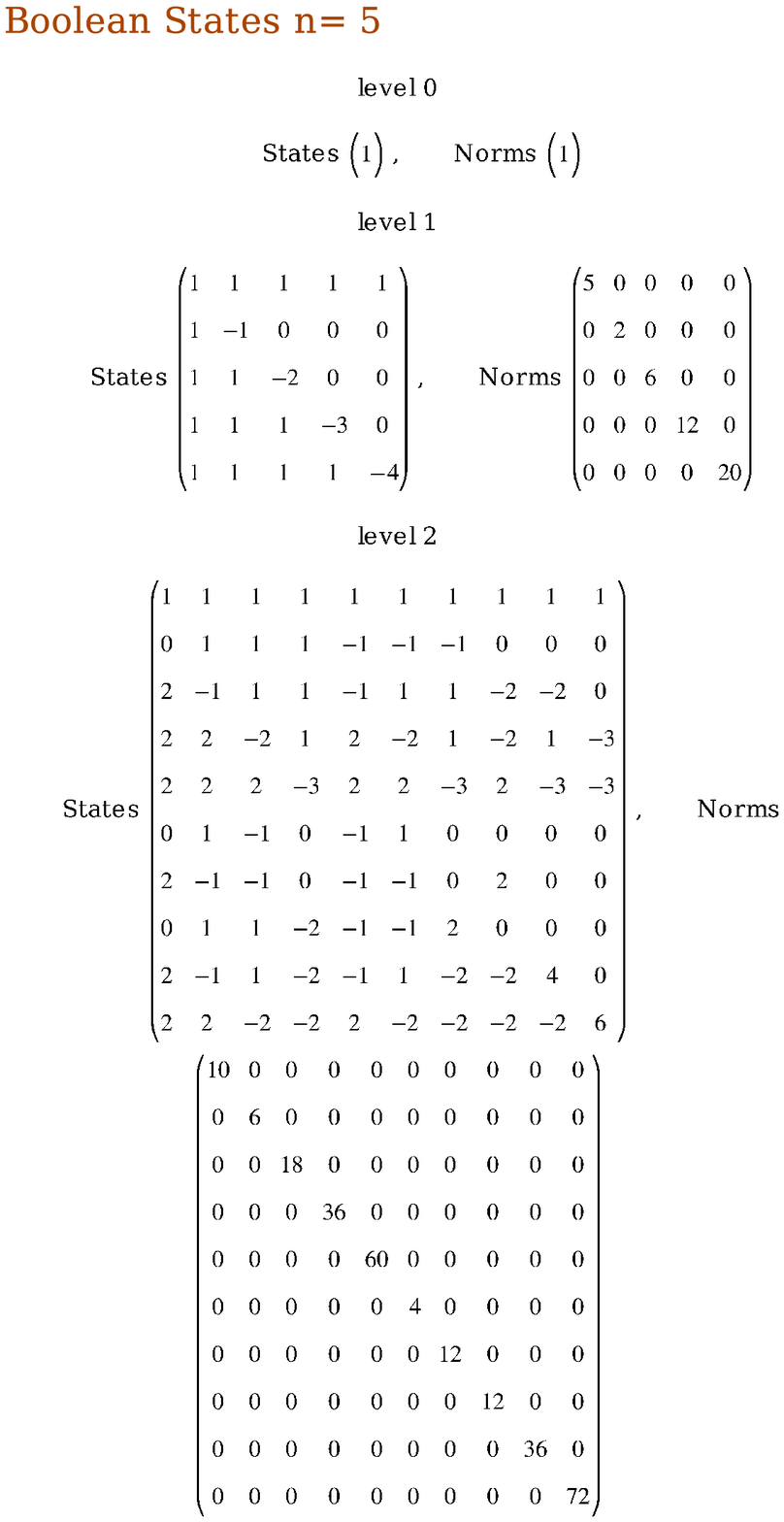}\newpage
\includegraphics[scale=.9]{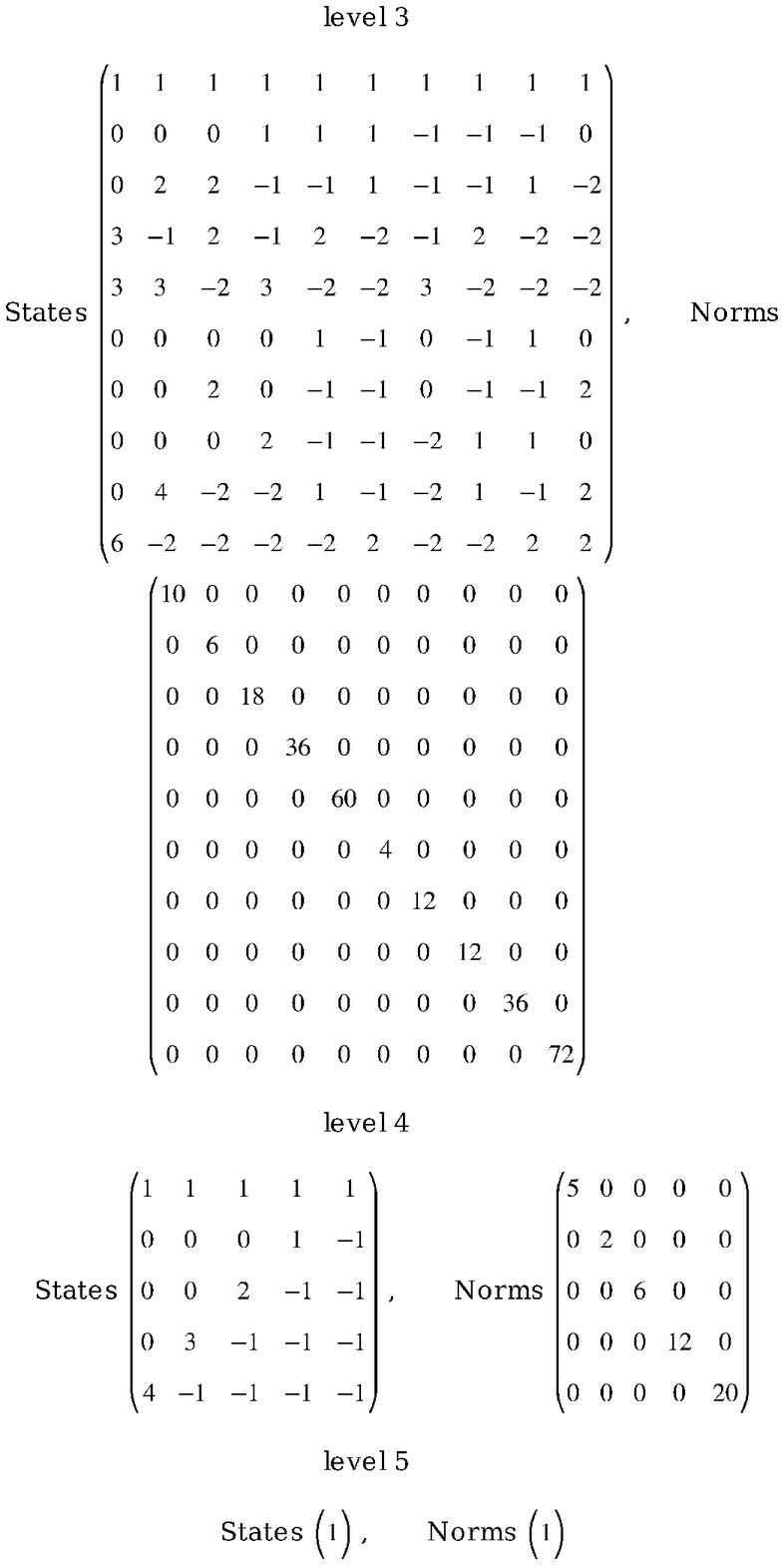}\newpage
\includegraphics[scale=.9]{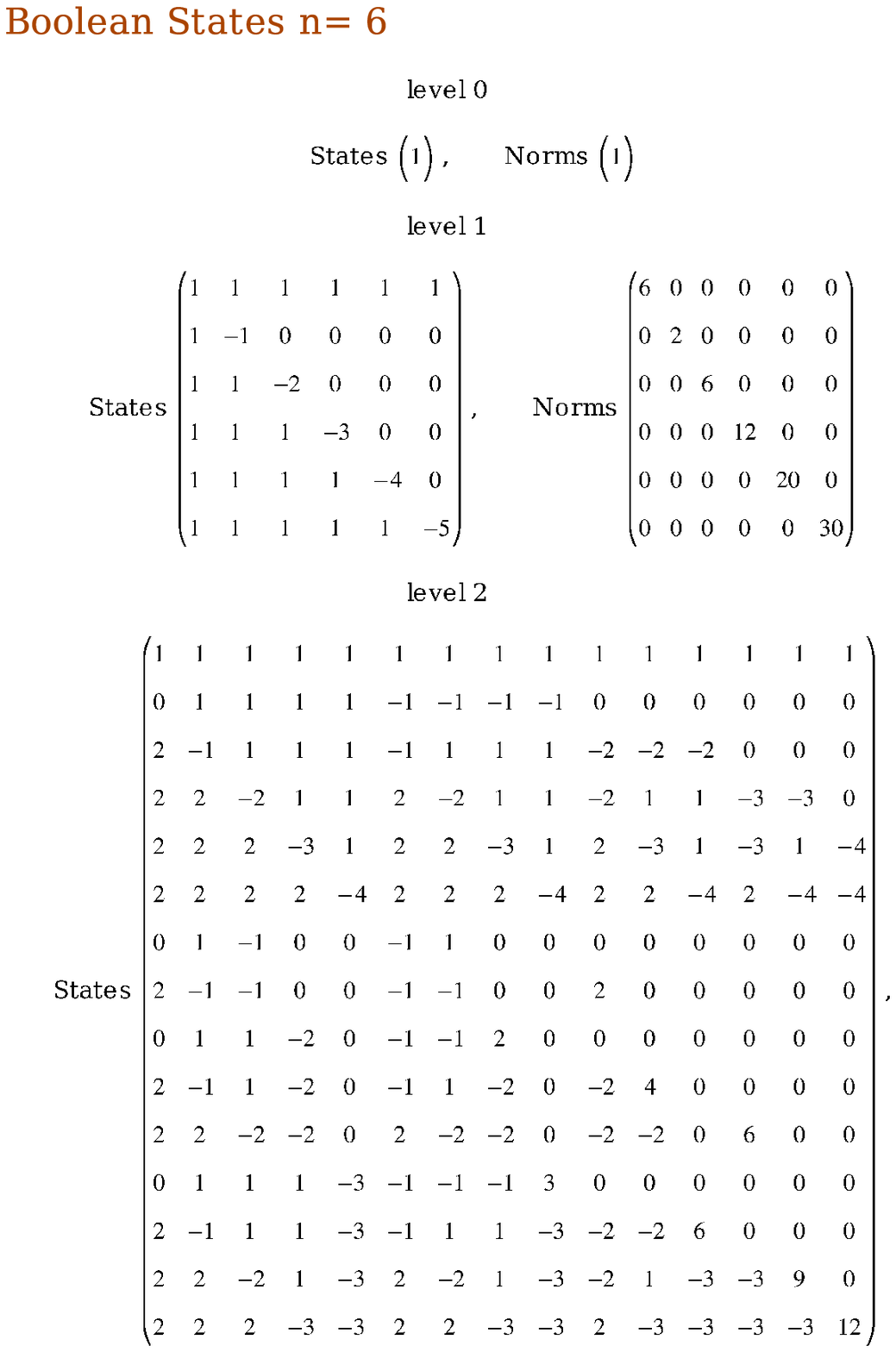}\newpage
\includegraphics[scale=.9]{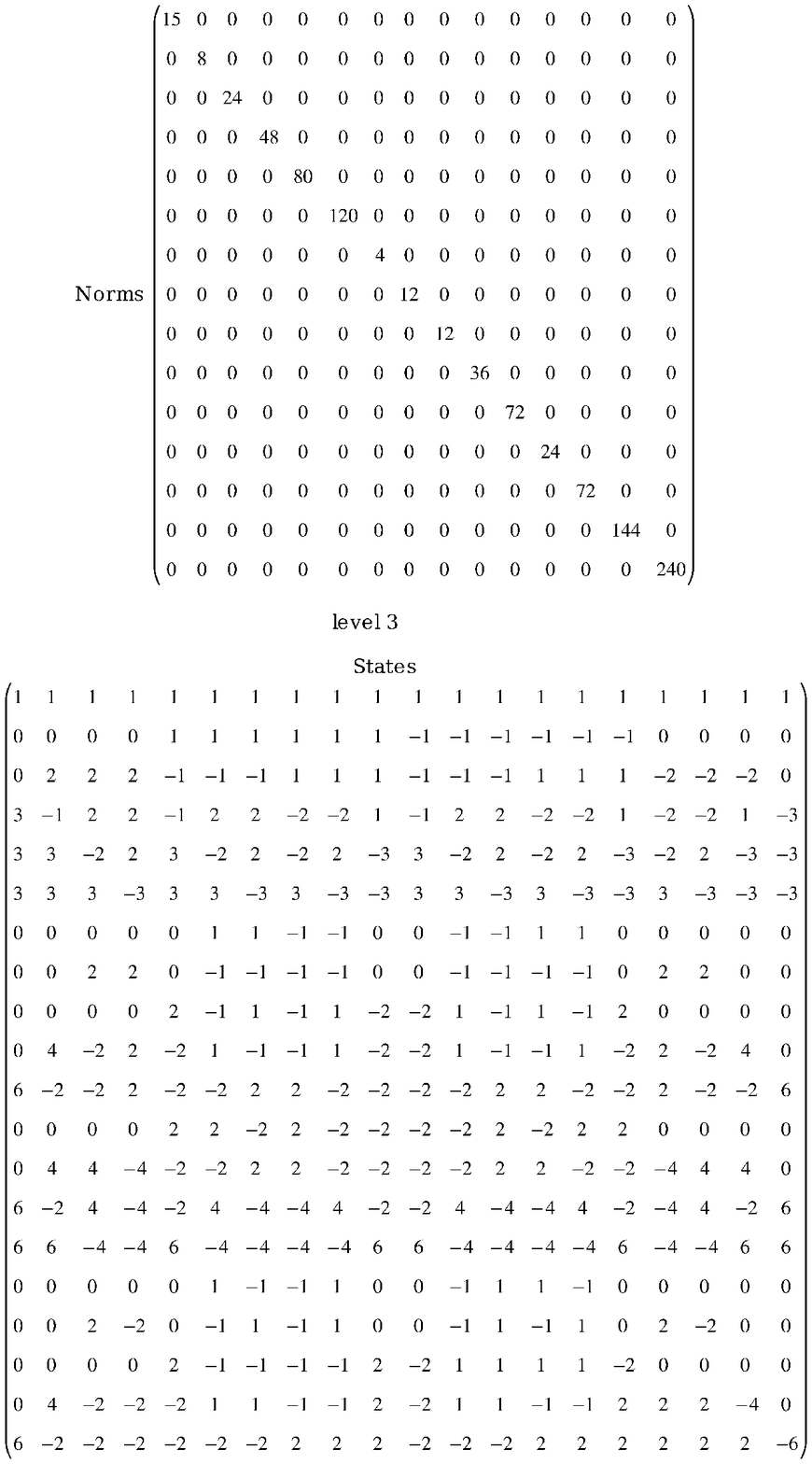}\newpage
\includegraphics[scale=.9]{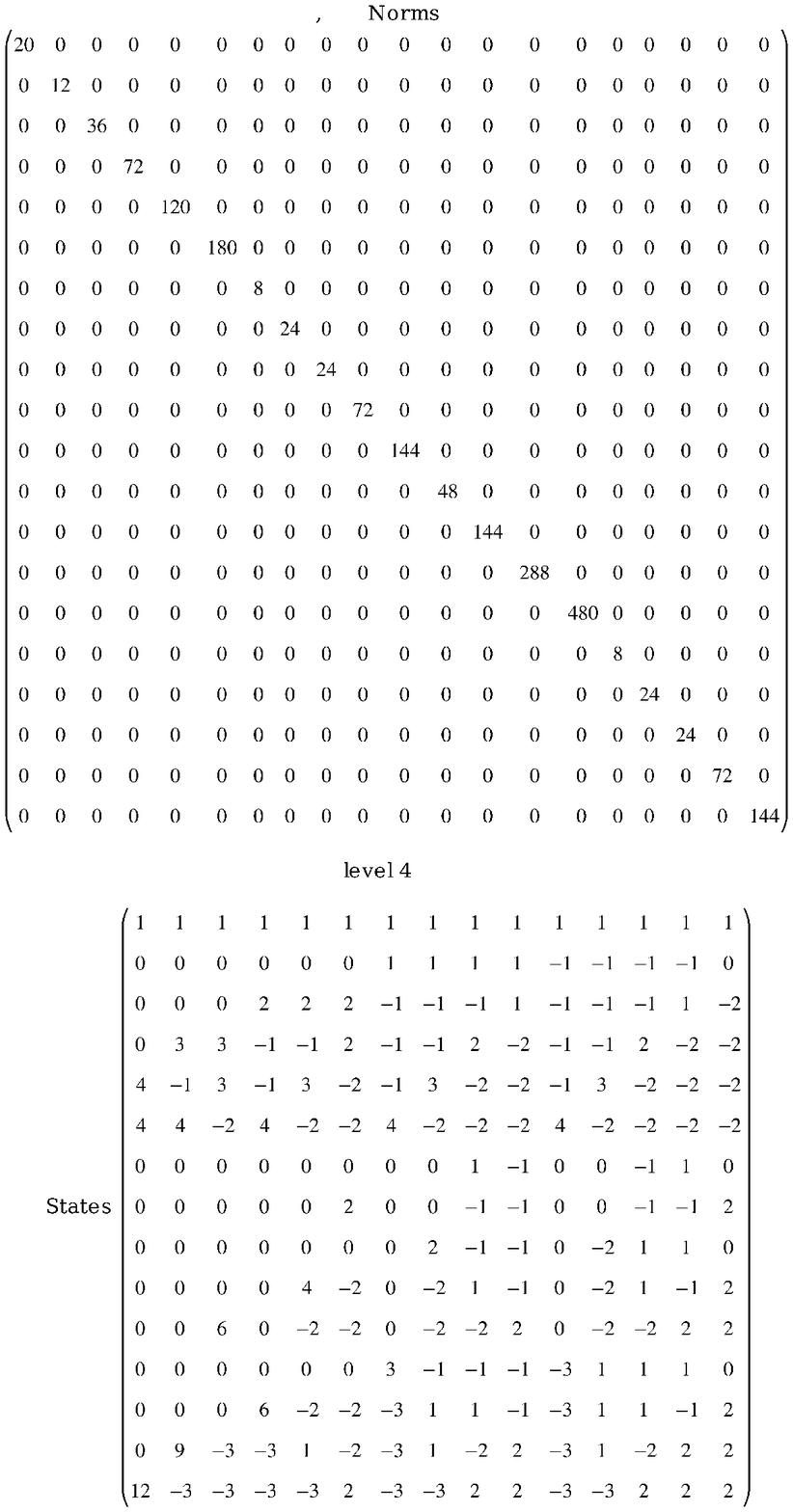}\newpage
\includegraphics{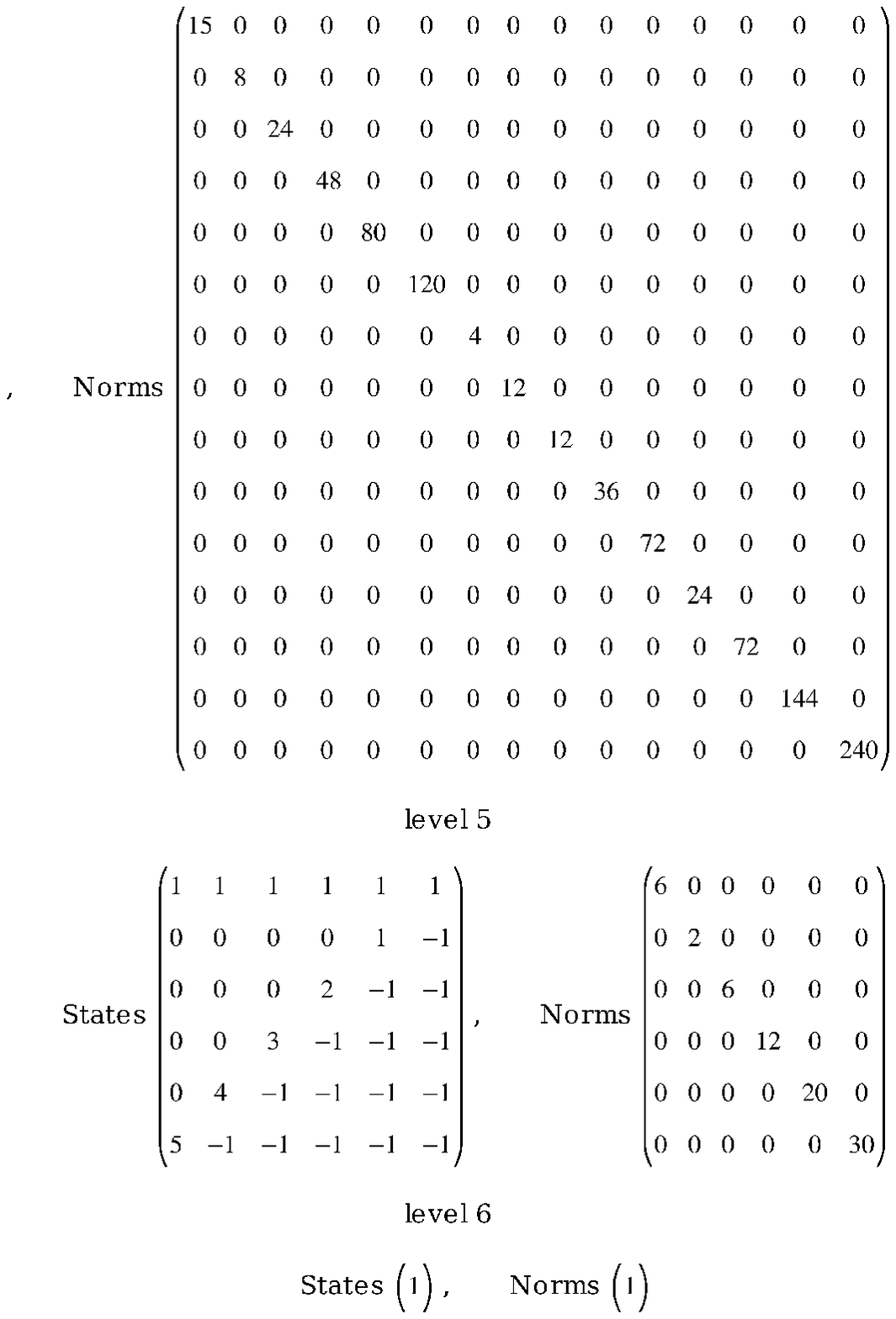}\newpage
}
\bibliographystyle{plain}
\bibliography{booleannotes}
\end{document}